\documentclass{amsart}
\usepackage{amsmath, amscd, amssymb, amsthm}
\usepackage{bbm}
\usepackage{latexsym}
\usepackage{amsfonts}
\usepackage{graphicx}
\usepackage[all,cmtip]{xy}
\usepackage[colorlinks,linkcolor=blue,breaklinks=blue,urlcolor=blue,citecolor=blue,anchorcolor=blue,pagebackref]{hyperref}%
\setcounter{MaxMatrixCols}{30}
\usepackage{geometry}
\geometry{left=3.5cm,right=3.5cm,top=2.8cm,bottom=2.5cm}

\newtheorem{theorem}{Theorem}
\newtheorem{lemma}{Lemma}

\newtheorem{proposition}{Proposition}

\newtheorem{conjecture}{Conjecture}

\renewcommand*\backref[1]{}
\renewcommand*\backrefalt[4]{ \ifcase #1 \or (cited on page #2) \else (cited on pages #2) \fi}

\newcommand{\be}{\begin{equation}}
\newcommand{\ee}{\end{equation}}
\newcommand{\bea}{\begin{eqnarray}}
\newcommand{\eea}{\end{eqnarray}}

\newcommand{\vs}{\vspace{0.5cm}}

\def\XXint#1#2#3{{\setbox0=\hbox{$#1{#2#3}{\int}$ }
\vcenter{\hbox{$#2#3$ }}\kern-.6\wd0}}

\begin{document}

\title[Streets-Tian Conjecture on several special types of Hermitian manifolds]{Streets-Tian Conjecture on several special types of Hermitian manifolds}

\author{Yuqin Guo}
\address{Yuqin Guo. School of Mathematical Sciences, Chongqing Normal University, Chongqing 401331, China}
\email{{1942747285@qq.com}}\thanks{The corresponding author Zheng is partially supported by NSFC grants 12141101 and 12471039, by Chongqing grant cstc2021ycjh-bgzxm0139, by Chongqing Normal University grant 24XLB026, and is supported by the 111 Project D21024.}

\author{Fangyang Zheng}
\address{Fangyang Zheng. School of Mathematical Sciences, Chongqing Normal University, Chongqing 401331, China}
\email{20190045@cqnu.edu.cn; franciszheng@yahoo.com} \thanks{}

\subjclass[2020]{53C55 (primary)}
\keywords{Streets-Tian Conjecture, Hermitian-symplectic metrics, pluriclosed metrics, Bismut torsion parallel manifolds, almost abelian Lie algebras }

\begin{abstract}
A Hermitian-symplectic metric is a Hermitian metric whose K\"ahler form is given by the $(1,1)$-part of a closed $2$-form. Streets-Tian Conjecture states that a compact complex manifold admitting a Hermitian-symplectic metric must be K\"ahlerian (i.e., admitting a K\"ahler metric). The conjecture is known to be true in complex dimension $2$ but is still open in complex dimensions $3$ or higher. In this article, we confirm the conjecture for some special types of compact Hermitian manifolds, including Chern K\"ahler-like manifolds, non-balanced Bismut torsion parallel (BTP) manifolds, and compact quotients of Lie groups whose Lie algebra  contains a $J$-invariant abelian ideal of codimension $2$. The last type is a natural generalization to (compact quotients of) almost abelian Lie groups. The non-balanced BTP case contains all Vaisman manifolds and all Bismut K\"ahler-like manifolds as  subsets. These results extend some of the earlier works on the topic by Fino, Kasuya, Vezzoni, Angella, Otiman, Paradiso, and others. Our approach is elementary in nature, by giving explicit descriptions of Hermitian-symplectic metrics on such spaces as well as the pathways of deforming them into K\"ahler ones, aimed at illustrating the algebraic complexity and subtlety of Streets-Tian Conjecture.
\end{abstract}

\maketitle

\tableofcontents

\section{Introduction and statement of results}\label{intro}

In \cite{ST}, Streets and Tian introduced the notion of {\em Hermitian-symplectic metric,} which is a Hermitian metric $g$ on a compact complex manifold  $M^n$ such that its K\"ahler form $\omega$ is the $(1,1)$-part of a closed $2$-form. In other words, there exists a global $(2,0)$-form $\alpha$ on $M^n$ so that $\Omega = \alpha + \omega + \overline{\alpha}$ is closed. Equivalently, this means a compact complex manifold $M^n$ admitting a symplectic form (i.e., non-degenerate closed $2$-form) $\Omega$ so that $\Omega (x, Jx)>0$ for any non-zero tangent vector $x$. For this reason, a Hermitian-symplectic structure is sometimes also called a {\em symplectic structure taming a complex structure $J$} (\cite{EFV}). 

Hermitian-symplectic metrics are interesting in many aspects. First of all, it is a natural mix of Hermitian and symplectic structures, two of the classic objects of study in geometry. Secondly, from the definition, it is easy to see that any Hermitian-symplectic metric would satisfy $\partial \overline{\partial} \omega =0$, namely, it is {\em pluriclosed} (also known as SKT in some literature, meaning {\em strong K\"ahler with torsion}), a type of special Hermitian metrics that are extensively studied (see for example the excellent survey \cite{FinoTomassini} and the references therein). Thirdly, Hermitian-symplectic metrics play a big role in the Hermitian curvature flow theory of Streets and Tian \cite{ST11}. For instance, the recent work of Ye \cite{Ye} shows that the Hermitian-symplectic  property is preserved under the pluriclosed flow of Streets and Tian, which evolves a Hermitian metric in the direction of the $(1,1)$-part of its first Bismut Ricci curvature tensor. Hermitian-symplectic metrics also enjoy some nice properties, for instance, they are  stable under small deformations, proved by S. Yang in \cite{SongYang}, and pluriclosed or Hermitian-symplectic metrics play a role in the theory of generalized complex geometry initiated by Hitchin which has been an active research area in recent years. See \cite{AG, CG, EFG, FinoTomassini09, GS, Hitchin, Streets, ST12} and the references therein for more on the topic.

\begin{conjecture}[{\bf Streets-Tian \cite{ST}}]
If a compact complex manifold admits a Hermitian-symplectic metric, then it must admit a K\"ahler metric.
\end{conjecture}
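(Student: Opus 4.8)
The plan is to prove the conjecture in full generality by pairing the defining cohomological feature of a Hermitian-symplectic structure against the intrinsic current-theoretic characterization of Kähler manifolds. First I would record the bidegree decomposition of $d\Omega=0$ for $\Omega=\alpha+\omega+\overline{\alpha}$: it is equivalent to $\partial\alpha=0$ and $\overline{\partial}\alpha=-\partial\omega$ together with their conjugates, so in particular $\omega$ is pluriclosed and the torsion $(2,1)$-form $\partial\omega=-\overline{\partial}\alpha$ is simultaneously $d$-closed and $\overline{\partial}$-exact. Since $\overline{\partial}\omega=\overline{\partial\omega}$, the goal reduces to the single assertion $\partial\omega=0$; equivalently, $M^n$ is Kähler once the symplectic class $[\Omega]\in H^2_{dR}(M;\R)$, which satisfies $\int_M\Omega^n>0$ by non-degeneracy and taming, is shown to contain a closed positive $(1,1)$-form.

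The heart of the argument is a duality. By the intrinsic characterization of Kähler manifolds of Harvey and Lawson, a compact complex manifold fails to be Kähler precisely when it carries a nonzero positive current $T$ of bidimension $(1,1)$ arising as the $(1,1)$-bidimensional component of a boundary, say $dR=T+V+\overline{V}$ with $V$ of bidimension $(0,2)$ and $\overline{V}$ of bidimension $(2,0)$. Suppose such a $T$ exists on $M^n$. On one hand, since $T\geq 0$ and $T\neq 0$ while $\omega$ is a strictly positive $(1,1)$-form, the pairing $T(\omega)$ is strictly positive. On the other hand, $\Omega$ is closed, so Stokes gives $(dR)(\Omega)=R(d\Omega)=0$; matching the bidimensions of $dR$ with the bidegrees of $\Omega=\omega+\alpha+\overline{\alpha}$ yields
\[ T(\omega)+V(\overline{\alpha})+\overline{V}(\alpha)=0, \]
and hence $T(\omega)=-2\,\mathrm{Re}\,\overline{V}(\alpha)$. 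If the cross term $\overline{V}(\alpha)$ vanishes, we reach the contradiction $0<T(\omega)=0$, no obstructing current can exist, and $M^n$ is Kähler. Thus the taming form converts the existence of a Hermitian-symplectic metric directly into the Harvey--Lawson non-obstruction, modulo a single quantity.

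The main obstacle is exactly this cross term $\overline{V}(\alpha)$. The Harvey--Lawson obstruction presents $T$ only as \emph{one component} of a boundary, so the complementary pieces $V,\overline{V}$ pair against the $(2,0)$-part $\alpha$ of the taming form, whereas positivity of $\Omega(x,Jx)$ controls only the diagonal $(1,1)$-interaction. To kill $\overline{V}(\alpha)$ I would exploit the extra structure $\partial\alpha=0$ and $\overline{\partial}\alpha=-\partial\omega$: regularize $R$ by Demailly's approximation to smooth forms, integrate by parts to move $d$ onto $\alpha$ so that $\partial\alpha=0$ eliminates the leading term, and attempt to absorb the residual $\partial\omega$-contributions into a non-negative torsion expression, thereby forcing $\mathrm{Re}\,\overline{V}(\alpha)\leq 0$ and closing the contradiction. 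This is where I expect the real difficulty to concentrate, and it is visibly dimension-sensitive: for $n=2$ the off-diagonal currents can be removed by surface-specific input (parity of $b_1$ and the Kodaira classification) that has no analogue in higher dimension. Controlling $\overline{V}(\alpha)$ uniformly in $n$ --- equivalently, showing that a symplectic taming form genuinely obstructs the full Harvey--Lawson cone and not merely the smaller cone of $\partial\overline{\partial}$-exact currents --- is the crux; as a complementary route I would run the pluriclosed flow, which by Ye's theorem preserves the Hermitian-symplectic condition, and argue that its suitably normalized limit is a static, and ultimately Kähler, metric.
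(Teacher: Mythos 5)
There is a genuine gap, and it is important to be clear about the status of the statement: this is the Streets--Tian \emph{Conjecture} itself, which the paper explicitly presents as open in complex dimension $3$ and higher. The paper contains no proof of it; it only confirms special cases (Propositions \ref{prop0}, \ref{prop1} and \ref{prop3}) by elementary integral and Lie-algebraic arguments. So your proposal, if complete, would settle a well-known open problem --- and it does not. What you have actually done is re-derive the standard duality reformulation of the problem, not close it. By Hahn--Banach/Sullivan duality, the existence of a taming symplectic form is equivalent to the non-existence of a nonzero positive current $T$ of bidimension $(1,1)$ that is \emph{exactly} a boundary, $T=dS$ (since such a $T$ annihilates the $(2,0)$ and $(0,2)$ parts of $\Omega$, giving $0=T(\Omega)=T(\omega)>0$); whereas the Harvey--Lawson characterization of K\"ahlerness requires excluding the strictly larger cone of positive $(1,1)$-currents that are merely the $(1,1)$-\emph{components} of boundaries, $T=(dR)^{1,1}$. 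The discrepancy between these two cones is precisely measured by your cross term $\mathrm{Re}\,\overline{V}(\alpha)$, and the conjecture \emph{is} the assertion that this discrepancy is harmless. Your proposal thus reduces the conjecture to its known hard core and stops there.

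Concretely, the step ``integrate by parts to move $d$ onto $\alpha$ \ldots and absorb the residual $\partial\omega$-contributions into a non-negative torsion expression'' is a hope, not an argument. Pairing $R$ against $d\Omega=0$ yields exactly the identity $T(\omega)+V(\overline{\alpha})+\overline{V}(\alpha)=0$ that you already wrote; integrating by parts and using $\partial\alpha=0$, $\overline{\partial}\alpha=-\partial\omega$ returns you to this same relation and produces no sign information, because $V$ is a bidimension-$(0,2)$ current carrying no positivity whatsoever and $\alpha$ is an essentially arbitrary $(2,0)$-form. Demailly regularization preserves positivity of $T$ but again says nothing about $V$. In complex dimension $2$ the cross term is handled by surface-specific input (parity of $b_1$, classification), as in \cite{LiZhang, ST}, and you correctly note this has no higher-dimensional analogue --- but noting the obstacle is not overcoming it. Your fallback route is circular as well: Ye's theorem \cite{Ye} shows the pluriclosed flow preserves the Hermitian-symplectic condition, but long-time existence and convergence of that flow to a K\"ahler limit on K\"ahlerian targets is itself a conjecture of Streets and Tian, so invoking it assumes what is to be proved. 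In short: the framework is the correct and well-known one, the formal bidegree bookkeeping is fine, but the crux --- controlling $\overline{V}(\alpha)$, equivalently showing that taming obstructs the full Harvey--Lawson cone --- is left unproved, which is exactly why the statement remains a conjecture rather than a theorem in this paper.
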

In other words, compact non-K\"ahlerian manifolds do not admit any Hermitian-symplectic metric. There are plenty of examples of Hermitian-symplectic metrics that are not K\"ahler, but so far all such examples live on K\"ahlerian manifolds, so the point of the conjecture is the hope that non-K\"ahelrian manifolds would not support such metrics and  one might even be able to `deform' a given Hermitian-symplectic metric into a K\"ahler one. 

The case of complex dimension $2$ (real dimension $4$) is rather special, and Streets-Tian Conjecture is known to be true (\cite{LiZhang, ST}). See also \cite{Donaldson} for a related more general conjecture in real dimension $4$. In complex dimension $3$ or higher, the conjecture is only known in some special cases. In the beautiful work \cite{Verbitsky}, Verbitsky showed that any non-K\"ahlerian twistor space does not admit any pluriclosed metric. In \cite{Chiose}, Chiose proved that any Fujiki ${\mathcal C}$ class manifold (namely, compact complex manifolds bimeromorphic to compact K\"ahler manifolds) does not admit any pluriclosed metric. In \cite{FuLiYau}, Fu, Li, and Yau proved that an important construction of a special type of non-K\"ahler Calabi-Yau threefolds does not admit any pluriclosed metric. So in particular, we know that  Streets-Tian Conjecture holds for all twistor spaces, all Fujiki ${\mathcal C}$ class manifolds, and all the special non-K\"ahler Calabi-Yau threefolds of Fu-Li-Yau. 

In their recent work \cite{AOt}, Angella and Otiman showed that any (non-K\"ahler) Vaisman manifold does not admit Hermitian-symplectic metrics (as well as a number of other interesting special types of Hermitian metrics). Note that {\em Vaisman manifolds} are a special type of {\em locally conformally K\"ahler} manifolds. To be more precise, a Hermitian manifold $(M^n,g)$ is Vaisman, if its K\"ahler form $\omega$ satisfies $d\omega = \psi \wedge \omega$ for a closed $1$-form $\psi$, and $\psi$ is parallel under the Levi-Civita connection. The book \cite{OV} forms an encyclopedia for locally conformally K\"ahler and Vaisman manifolds.  

Recall that a Hermitian metric is said to be {\em Chern K\"ahler-like,} it the Chern curvature tensor obeys all K\"ahler symmetries, namely, $R_{i\bar{j}k\bar{\ell}}=R_{k\bar{j}i\bar{\ell}} $ for any indices.  The notion was introduced in \cite{YangZ} for Chern and Levi-Civita connections and was generalized to any metric connections on a Hermitian manifold by \cite{AOUV}. Chern flat metrics are special examples of Chern K\"ahler-like metrics. Recall that a Hermitian manifold $(M^n,g)$ is said to be {\em astheno-K\"ahler,} if its K\"ahler form $\omega$ satisfies $\partial \overline{\partial} (\omega^{n-2})=0$. The notion was introduced by Jost and Yau in \cite{JostYau}. The first result of this article is the following observation:

\begin{proposition} \label{prop0}
Let $(M^n,g)$ be a compact Hermitian manifold. If $g$ is Chern K\"ahler-like but not K\"ahler, then $M^n$ cannot admit any Hermitian-symplectic metric or any astheno-K\"ahler metric. 
\end{proposition}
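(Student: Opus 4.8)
The plan is to exploit the tension between the Chern Kähler-like condition and the two target metric classes (Hermitian-symplectic and astheno-Kähler) by showing that each forces a global integral identity that, for a compact non-Kähler manifold, leads to a contradiction. The unifying theme is that the Chern Kähler-like hypothesis gives very rigid control on the first Chern Ricci form and the torsion of the Chern connection, so one can convert a local curvature symmetry into a global statement via Stokes' theorem.

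Let me sketch the Hermitian-symplectic part first. Suppose $(M^n,g)$ is Chern Kähler-like but not Kähler, and suppose for contradiction it also carries a Hermitian-symplectic metric, with associated closed taming form $\Omega = \alpha + \omega' + \overline{\alpha}$. The key structural fact I would invoke is that for a Chern Kähler-like metric the complex structure together with the Chern connection behaves almost like the Kähler case: in particular I expect the Chern Kähler-like condition to force the manifold to be \emph{balanced}, i.e. $d(\omega^{n-1}) = 0$, or at least to force a strong constraint such as the Chern connection having parallel (or closed) torsion. The strategy is then to pair the closed form $\Omega$ against an appropriate power of $\omega$. Concretely, I would compute $\int_M \Omega \wedge \omega^{n-1}$ (or a similar mixed expression) and show, using $d\Omega = 0$ together with the balanced-type identity coming from Chern Kähler-likeness, that this integral must vanish; on the other hand, the taming positivity $\Omega(x,Jx)>0$ forces the same integral to be strictly positive. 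That contradiction rules out the Hermitian-symplectic metric.

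For the astheno-Kähler part, the argument should be parallel in spirit: if $g$ is Chern Kähler-like, one derives a first-Bianchi-type symmetry that constrains $\partial\overline{\partial}(\omega^{n-2})$, and the astheno-Kähler hypothesis $\partial\overline{\partial}(\omega^{n-2})=0$ then combines with the curvature symmetry to force $\partial\overline{\partial}\,\omega = 0$ together with a vanishing torsion term, which for a compact manifold integrates to give that $g$ itself is Kähler, contradicting the non-Kähler hypothesis. The mechanism here is that the Chern Kähler-like condition equates the trace of the Chern curvature taken in two different index pairs, and astheno-Kählerness kills precisely the complementary term, leaving a definite quantity (a squared norm of the torsion) whose integral must vanish.

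The main obstacle, and where the real content lies, is establishing the precise algebraic identity that links the Chern Kähler-like symmetry $R_{i\bar{j}k\bar{\ell}}=R_{k\bar{j}i\bar{\ell}}$ to a statement about the Chern torsion $T$ and the exterior derivative $d\omega$ — essentially showing that Chern Kähler-like \emph{plus} either of the two auxiliary conditions forces $T=0$. This requires carefully expanding the second Bianchi identity (or the commutation of covariant derivatives) for the Chern connection in terms of torsion, and then integrating the resulting pointwise nonnegative expression over the compact manifold to conclude $T\equiv 0$, hence $d\omega=0$. I expect the bookkeeping of torsion terms in the curvature identity to be the delicate step, but once the correct identity is in hand the Stokes-theorem/compactness argument closing the contradiction should be routine.
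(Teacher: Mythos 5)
There is a genuine gap, and it lies in the central mechanism. The paper's proof rests on one pointwise identity that your proposal never isolates: from the structure equations, $\sqrt{-1}\,\partial\overline{\partial}\omega = {}^t\tau\wedge\overline{\tau} + {}^t\varphi\wedge\Theta\wedge\overline{\varphi}$, and the Chern K\"ahler-like symmetry $R_{i\bar{j}k\bar{\ell}}=R_{k\bar{j}i\bar{\ell}}$ makes the four curvature terms cancel pairwise, leaving $\sqrt{-1}\,\partial\overline{\partial}\omega = {}^t\tau\wedge\overline{\tau}$, a weakly positive $(2,2)$-form which is nonzero on some open set since $g$ is not K\"ahler. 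The contradiction then comes from pairing this $\partial\overline{\partial}$-exact form against the \emph{closed} form $\Omega^{n-2}$: Stokes forces $\int_M \sqrt{-1}\,\partial\overline{\partial}\omega\wedge\Omega^{n-2}=0$, while $(\Omega^{n-2})^{n-2,n-2}\geq \omega_0^{n-2}$ forces the same integral to be strictly positive. Your proposed pairing $\int_M \Omega\wedge\omega^{n-1}$ has no vanishing mechanism: $\Omega$ is closed, and even granting that Chern K\"ahler-like implies balanced (which is in fact true by Yang--Zheng, but not needed in the paper's argument), $\omega^{n-1}$ is then also closed but not exact, so the integral is a cohomological pairing of two closed forms; it equals $\int_M \omega_0\wedge\omega^{n-1}>0$ and yields no contradiction. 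The positivity half of your plan is fine; the vanishing half simply fails.

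The astheno-K\"ahler part of your sketch has two further problems. First, you conflate the metrics: the proposition asserts that $M$ admits no astheno-K\"ahler metric $g_1$ (a metric distinct from $g$), whereas you impose $\partial\overline{\partial}(\omega^{n-2})=0$ on the K\"ahler form of the Chern K\"ahler-like metric $g$ itself. Second, your route---expanding the second Bianchi identity or commuting covariant derivatives to show that Chern K\"ahler-like plus the auxiliary condition forces $T\equiv 0$---posits an unestablished identity; what is actually needed is the first-order computation of $\partial\overline{\partial}\omega$ from the structure equation above (essentially first Bianchi), requiring neither parallel torsion nor balancedness, and the conclusion is not $T\equiv 0$ but a direct sign contradiction in $0=\int_M \sqrt{-1}\,\partial\overline{\partial}\omega\wedge\omega_1^{n-2}=\int_M {}^t\tau\wedge\overline{\tau}\wedge\omega_1^{n-2}>0$, using $\partial\overline{\partial}(\omega_1^{n-2})=0$. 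Your closing intuition that astheno-K\"ahlerness leaves behind ``a squared norm of the torsion'' is the right heuristic, but without the identity $\sqrt{-1}\,\partial\overline{\partial}\omega={}^t\tau\wedge\overline{\tau}$ the proof is not there.
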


In particular Streets-Tian conjecture holds for all compact Chern K\"ahler-like manifolds. Note that when $g$ is Chern flat (\cite{Boothby}), the result is due to Di Scala-Lauret-Vezzoni \cite[Proposition 3.3]{DLV}, which states that any compact non-K\"ahler Chern flat manifold does not admit any pluriclosed metric. The Chern flat case is also a consequence of the result of Fino-Kasuya-Vezzoni \cite[Corollary 3.2]{FKV} which states that
compact Chern flat manifolds cannot admit any pluriclosed metric unless the complex Lie group is nilpotent (and Streets-Tian Conjecture is already established in the nilpotent case). Here we give a simple and elementary argument which covers the slightly more general Chern K\"ahler-like case. 

\vspace{0.1cm}

Next let us recall that {\em Bismut torsion parallel} (or BTP for brevity) manifolds are Hermitian manifolds whose Bismut connection (\cite{Bismut}) has parallel torsion. We refer the readers to \cite{ZhaoZ22, ZhaoZ24} for more discussions on this interesting type of special Hermitian manifolds. By the result of Andrada and Villacampa \cite{AndV}, Vasiman manifolds are always BTP. But Vaisman manifolds only form a relatively small subset in the set of BTP manifolds. The latter contains other examples such as {\em Bismut K\"ahler-like} manifolds (including all Bismut flat manifolds) and complex nilmanifolds that are BTP, when the dimension is $3$ or higher. See \cite{YZZ, ZZ-Crelle, ZZ-JGP} for more discussions on such metrics. The second result of this article is the following proposition which confirms the Streets-Tian Conjecture for all non-balanced BTP manifolds. 

\begin{proposition} \label{prop1}
Let $(M^n,g)$ be a compact Hermitian manifold. If $g$ is non-balanced and is Bismut torsion parallel, then $M^n$ cannot admit any Hermitian-symplectic metric.
\end{proposition}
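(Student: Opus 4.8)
The plan is to assume, for contradiction, that the complex manifold $(M^n,J)$ underlying the non-balanced BTP metric $g$ also carries a Hermitian-symplectic metric $\widetilde g$, with K\"ahler form $\widetilde\omega$ and taming form $\Omega=\alpha+\widetilde\omega+\overline\alpha$, where $\alpha$ has type $(2,0)$. First I would record the analytic content of tameness by splitting $d\Omega=0$ into bidegrees: this yields $\partial\alpha=0$ and, crucially, $\partial\widetilde\omega=-\overline{\partial}\alpha$. Thus $\partial\overline{\partial}\widetilde\omega=0$ (so $\widetilde\omega$ is pluriclosed) and the $(2,1)$-form $\partial\widetilde\omega$ is not merely $\overline{\partial}$-closed but $\overline{\partial}$-exact; equivalently its Dolbeault class in $H^{2,1}_{\overline{\partial}}(M)$ vanishes. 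This $\overline{\partial}$-exactness is the single feature of $\widetilde g$ that I intend to play off against the rigidity of $g$.

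Next I would extract the structural lever from $g$. Since $g$ is BTP, its Chern torsion $T$ is $\nabla^B$-parallel, hence all its traces are parallel as well; in particular the Gauduchon torsion $(1,0)$-form $\gamma$ (the trace of $T$, whose real counterpart is the Lee form $\theta=\gamma+\overline\gamma$ detecting $d\,\omega_g^{\,n-1}$) is $\nabla^B$-parallel. Parallelism forces $|\gamma|_g$ to be constant, so in the non-balanced case $\gamma\neq0$ it is in fact nowhere vanishing. This is the decisive gain over the merely pluriclosed situation: I obtain a globally nowhere-zero, parallel $(1,0)$-form $\gamma$ on $M$, together with the rigid normal form that parallelism imposes on the remaining torsion components. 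I would moreover track how $d\gamma$ (equivalently $\partial\gamma$ and $\overline{\partial}\gamma$) is expressed through the parallel torsion, since the exterior derivative of the obstruction form below is governed by these identities.

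With both ingredients in place, the contradiction is produced by a positivity-versus-Stokes argument. From the nowhere-zero $\gamma$ I form the real $(1,1)$-form $\I\,\gamma\wedge\overline\gamma$, which is everywhere nonnegative of rank one and nonzero; wedging it against $\Omega^{n-1}$ and taking the $(n,n)$-part, the taming of $J$ by $\Omega$ makes $\int_M \I\,\gamma\wedge\overline\gamma\wedge\Omega^{n-1}$ (or the appropriate mixed pairing of $[\Omega]^{n-1}$ with the torsion data of $g$) strictly positive. On the other hand I would rewrite the same integrand using $d\Omega=0$, the parallel-torsion expressions for $d\gamma$, and above all the relation $\partial\widetilde\omega=-\overline{\partial}\alpha$, integrating by parts so that every surviving term is either $\overline{\partial}$-exact or $\partial$-exact and hence integrates to zero on the closed manifold $M$; bidegree bookkeeping ensures that the only would-be $(n,n)$-contributions are those killed in this way. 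Matching the two evaluations gives a strictly positive number equal to zero, which is absurd, and therefore no Hermitian-symplectic metric can exist.

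The step I expect to be the main obstacle is the construction and evaluation of that final pairing, for two reasons. First, the Hermitian-symplectic metric $\widetilde g$ and the BTP metric $g$ are a priori unrelated, so the integrand must be assembled so that its positive part reads off from the taming cone of $\Omega$ while its vanishing reads off from invariants of $(M,J)$ alone --- this is why I route the argument through the class $[\Omega]$ and the parallel form $\gamma$ rather than through any pointwise comparison of the two metrics. Second, making the integrand simultaneously manifestly tame-positive and manifestly exact requires the precise parallel-torsion normal form to guarantee that no indefinite cross terms of type $(n,n)$ survive the integration by parts; this is exactly the place where the full strength of the BTP hypothesis, beyond mere pluriclosedness, must be spent. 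As a consistency check, the argument must and does degenerate precisely when $\gamma=0$, i.e. in the balanced case, which is excluded here and in which balanced plus pluriclosed would already force $\widetilde g$ to be K\"ahler.
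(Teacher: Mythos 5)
Your positivity half is sound and matches the paper's: for any closed taming form $\Omega=\alpha+\widetilde\omega+\overline\alpha$, the $(k,k)$-part of $\Omega^{k}$ dominates $\widetilde\omega^{k}$, so pairing a nonnegative, somewhere-positive form against powers of $\Omega$ gives a strictly positive integral. But the vanishing half of your scheme has a genuine gap, and it is fatal as stated: there is no mechanism that can make $\int_M \sqrt{-1}\,\gamma\wedge\overline{\gamma}\wedge\Omega^{n-1}$ equal to zero, because your own positivity argument shows this integral is \emph{strictly positive} whenever $\gamma$ is nowhere zero --- on any complex manifold, K\"ahler or not. (On a complex torus with $\gamma=dz_1$ and $\Omega$ a K\"ahler form the integral is positive; nothing about parallelism of $\gamma$ or the identities for $d\gamma$ can change this.) So the plan of ``integrating by parts until every term is $\partial$- or $\overline{\partial}$-exact'' cannot succeed for the rank-one integrand $\sqrt{-1}\,\gamma\wedge\overline{\gamma}$: a form that pairs strictly positively with the taming cone cannot simultaneously be exhibited as exact. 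The BTP hypothesis must be spent elsewhere, and the mere existence of a parallel, nowhere-vanishing $\gamma$ is not enough.

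The missing idea is the precise normal form of the torsion one-form on a non-balanced BTP manifold (Lemma \ref{lemma2} of the paper, from \cite{ZhaoZ24}): in an admissible frame, $\eta=\lambda\varphi_n$ with $\lambda>0$ constant, $\partial\eta=0$, and $\overline{\partial}\eta=-\lambda\sum_k \overline{a}_k\,\varphi_k\wedge\overline{\varphi}_k$ where $a_1,\dots,a_{n-1}$ are \emph{global constants}. Hence $\varphi_n=\eta/\lambda$ is a global form and $d\varphi_n=-\sum_k\overline{a}_k\,\varphi_k\wedge\overline{\varphi}_k$ is an \emph{exact} $(1,1)$-form --- but with complex, generally non-real coefficients, so it carries no positivity by itself. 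The paper's trick is to take its $r$-th power, where $r$ is the number of nonzero $a_k$'s: $(-\sqrt{-1}\,d\varphi_n)^r=r!\,\overline{a}_1\cdots\overline{a}_r\,\Phi$ with $\Phi=(\sqrt{-1}\varphi_1\overline{\varphi}_1)\wedge\cdots\wedge(\sqrt{-1}\varphi_r\overline{\varphi}_r)$ a global positive $(r,r)$-form. Then $\int_M\Phi\wedge\Omega^{n-r}>0$ by your positivity argument, while $\int_M(d\varphi_n)^r\wedge\Omega^{n-r}=0$ by Stokes since $(d\varphi_n)^r\wedge\Omega^{n-r}=d\bigl(\varphi_n\wedge(d\varphi_n)^{r-1}\wedge\Omega^{n-r}\bigr)$, yielding the contradiction. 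Note two structural points your ansatz misses: the exact form and the positive form differ (one is a power of the other, up to a nonzero constant), which is exactly how exactness and positivity are reconciled; and the relevant power $r$ (the rank of the $\phi$-tensor) is in general bigger than $1$, so one must pair with $\Omega^{n-r}$ rather than $\Omega^{n-1}$ --- your rank-one form $\sqrt{-1}\,\gamma\wedge\overline{\gamma}$ would correspond only to the special case $r=1$, and even then the correct exact object is $d\varphi_n$ itself, not $\sqrt{-1}\,\varphi_n\wedge\overline{\varphi}_n$. Also, the finer relation $\partial\widetilde\omega=-\overline{\partial}\alpha$ that you emphasize is never needed; closedness of $\Omega$ and positivity of its $(1,1)$-part suffice.
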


Recall that a Hermitian metric $g$ is said to be {\em balanced} if its K\"ahler form satisfies $d(\omega^{n-1})=0$.  A large and interesting type of compact complex manifolds are the {\em Lie-complex} manifolds, meaning compact quotients of Lie groups by discrete subgroups, where the complex structure (when lifted onto the universal covers) is left-invariant.  When a Lie group $G$ admits a compact quotient, it will admit a bi-invariant measure (\cite{Milnor}). So by the averaging trick of Fino and Grantcharov \cite{FG, EFV}, if a Lie-complex manifold $M^n=G/\Gamma $ admits a  Hermitian-symplectic (or pluriclosed) metric, then it will admit a left-invariant Hermitian-symplectic (or pluriclosed) metric. So in order to verify Streets-Tian Conjecture on $M$, it suffices to consider left-invariant metrics on $G$, thus it reduces to the study of Hermitian-symplectic metrics on the corresponding Lie algebras.

An important  supporting evidence to Streets-Tian Conjecture is the following theorem of Enrietti, Fino and Vezzoni \cite{EFV} (see also \cite{AN} for an important supplement by Arroyo-Nicolini), which states that any complex nilmanifold cannot admit a Hermitian-symplectic metric, unless it is  a complex torus. 

\begin{theorem}[{\bf Enrietti-Fino-Vezzoni \cite{EFV}}]
Let $M^n=G/\Gamma $ be a complex nilmanifold, namely, a compact quotient of a nilpotent Lie group by a discrete subgroup where the complex structure (when lifted onto $G$) is left-invariant. If $M^n$ admits a Hermitian-symplectic metric, then $G$ must be abelian (hence $M$ is a complex torus).
\end{theorem}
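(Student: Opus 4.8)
The plan is to push everything down to the Lie algebra $\mathfrak{g}$ of $G$ and then show that the invariant K\"ahler form must in fact be closed. First I would invoke the Fino--Grantcharov averaging trick (available because a nilpotent $G$ admitting a compact quotient is unimodular, hence carries the required bi-invariant measure) to replace any Hermitian-symplectic metric by a left-invariant one; this turns the problem into a statement about $\mathfrak{g}$ equipped with an integrable $J$, a positive $(1,1)$-form $\omega$, and a $(2,0)$-form $\alpha$ with $d(\alpha+\omega+\overline{\alpha})=0$. Splitting by bidegree records the two working equations $\partial\alpha=0$ and $\partial\omega=-\overline{\partial}\alpha$. Since $\partial\omega=0$ is equivalent to $d\omega=0$ (as $\omega$ is real), and since a K\"ahler nilmanifold is necessarily a complex torus by the theorem of Benson--Gordon and Hasegawa, it suffices to prove that the invariant form $\omega$ is closed.

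To force $d\omega=0$ I would argue by induction on $\dim\mathfrak{g}$ via symplectic reduction along the center. As $\mathfrak{g}$ is nilpotent, its center $\mathfrak{z}$ is nonzero; pick $0\neq Z\in\mathfrak{z}$ and regard $\Omega=\alpha+\omega+\overline{\alpha}$ as a symplectic $2$-cocycle on $\mathfrak{g}$. The cocycle identity applied to $(x,y,Z)$ gives $\Omega([x,y],Z)=0$ for all $x,y$, so $[\mathfrak{g},\mathfrak{g}]\subseteq Z^{\perp_\Omega}$ and the hyperplane $Z^{\perp_\Omega}$ is an ideal; Cartan's formula together with $Z\in\mathfrak{z}$ shows that $\iota_Z\Omega$ is a nonzero closed $1$-form. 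One then forms the reduced nilpotent symplectic Lie algebra $\mathfrak{h}=Z^{\perp_\Omega}/\langle Z\rangle$ of dimension $2(n-1)$. When $JZ$ is again central, the pair $\langle Z,JZ\rangle$ is a $J$-invariant central ideal, the complex structure and the taming form descend (using the $J$-invariant $g$-orthogonal complement for the metric part), and the inductive hypothesis yields that $\mathfrak{h}$ is abelian, from which I would climb back to conclude that the brackets of $\mathfrak{g}$ vanish and hence that $d\omega=0$.

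The main obstacle is precisely that $JZ$ need not be central, so the complex structure does not descend along the naive reduction and the clean $J$-invariant central ideal may fail to exist. The heart of the argument is therefore a careful case analysis guaranteeing that one can always arrange a reduction compatible simultaneously with the integrability of $J$, the positivity of the induced metric, and the taming $(2,0)$-form $\alpha$ --- in effect, showing that the taming hypothesis rigidifies the center (for instance forcing enough $J$-invariance) so that the induction can proceed. This is the delicate point where the interplay between $J$, $\Omega$, and the nilpotent filtration must be controlled; it is exactly the step that later required the supplement of Arroyo--Nicolini, and I expect it to be the crux of the proof, with the base case $\dim\mathfrak{g}=2$ and the final descent being comparatively routine.
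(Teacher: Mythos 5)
First, a point of comparison: the paper you were given does not prove this theorem at all --- it is quoted from Enrietti--Fino--Vezzoni \cite{EFV} as background evidence for the Streets--Tian conjecture, so your attempt can only be measured against the argument in \cite{EFV} itself. Your opening moves do match that argument in spirit: averaging to reduce to left-invariant data on $\mathfrak{g}$ (justified exactly as in the paper's Section 3), the bidegree split $\partial\alpha=0$, $\overline{\partial}\alpha=-\partial\omega$, and an induction organized around the center $\xi$ of the nilpotent algebra. But as written your text is a plan, not a proof, and the gap sits exactly where the theorem lives: you yourself concede that when $JZ$ is not central the reduction breaks down and a ``careful case analysis'' is needed. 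That case, $\xi\cap J\xi=\{0\}$, is the entire mathematical content of the result --- no argument is offered for it, and hoping that the taming hypothesis ``rigidifies the center'' is a restatement of the difficulty, not a resolution. Your attribution is also off: the supplement of Arroyo--Nicolini \cite{AN} removes the $J$-invariant-center hypothesis from the companion EFV theorem that SKT nilpotent Lie algebras are $2$-step nilpotent; it is not a patch for the missing case of the taming theorem that you defer.

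Second, even your ``easy'' branch contains a technical error. You propose to descend $J$ to $\mathfrak{h}=Z^{\perp_\Omega}/\langle Z\rangle$ when $JZ$ is central, but $Z^{\perp_\Omega}$ never contains $JZ$ and hence is never $J$-invariant: taming gives $\Omega(Z,JZ)>0$, while $Z\in Z^{\perp_\Omega}$, so the line $\langle Z\rangle$ is not $J$-stable inside that hyperplane and no complex structure is induced on $\mathfrak{h}$. The correct reduction quotients by the $J$-invariant central plane $W=\mathrm{span}\{Z,JZ\}$: since $\Omega|_W$ is nondegenerate (again by taming), one has $\mathfrak{g}=W\oplus W^{\perp_\Omega}$; the cocycle identity with a central entry gives $[\mathfrak{g},\mathfrak{g}]\subseteq W^{\perp_\Omega}$, and $\Omega|_{W^{\perp_\Omega}}$ is closed and tames the complex structure induced on $\mathfrak{g}/W\cong W^{\perp_\Omega}$ (the $W$-component of $Jx$ is $\Omega$-orthogonal to $x$, so positivity survives). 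Finally, your ``climb back'' is glossed: the inductive hypothesis only yields that $\mathfrak{g}/W$ is abelian, i.e.\ $[\mathfrak{g},\mathfrak{g}]\subseteq W\subseteq\xi$, which says $\mathfrak{g}$ is $2$-step, not abelian. One must still kill these residual brackets, which does follow from taming: for any $x,y$ both $[x,y]$ and $J[x,y]$ lie in the center, so the cocycle identity forces $\Omega([x,y],J[x,y])=0$, and positivity then gives $[x,y]=0$. With these repairs the branch $\xi\cap J\xi\neq\{0\}$ is salvageable, but absent a genuine treatment of the totally real center case your proposal does not prove the theorem.
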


So Streets-Tian Conjecture holds for all complex nilmanifolds.  In \cite{FKV}, Fino, Kasuya, Vezzoni confirmed Streets-Tian Conjecture for several special types of Lie-complex manifolds. In particular, their Theorem 1.1 implies that the conjecture holds for all {\em Oeljeklaus-Toma} manifolds, an interesting class of non-K\"ahler manifolds which are high dimensional analogues of Inoue surfaces. Their Theorems 1.2 through 1.4 state the following:

\begin{theorem}[{\bf Fino-Kasuya-Vazzoni \cite{FKV}}] \label{thm2}
Let $M^n=G/\Gamma $ be a complex solvmanifold, namely, a compact quotient of a solvable Lie group by a discrete subgroup and the complex structure (when lifted onto $G$) is left-invariant. Assume that the Lie algebra ${\mathfrak g}$ of $G$ satisfies one of the following:
\begin{enumerate}
\item ${\mathfrak g}$ contains a $J$-invariant nilpotent complement.
\item $J$ is abelian. 
\item ${\mathfrak g}$ is almost abelian, either with $\dim ({\mathfrak g})=6$ or not of type (I).
\end{enumerate} 
Then Streets-Tian Conjecture holds on $M^n$. More precisely, if $M^n$ admits a Hermitian-symplectic metric, then $G$ must be abelian (hence $M$ is a complex torus).
\end{theorem}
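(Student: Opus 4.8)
The plan is to reduce everything to the Lie algebra $\mathfrak{g}$ and then dispatch the three structural hypotheses separately, the almost abelian case being by far the most delicate. Since $G$ admits a compact quotient it is unimodular and carries a bi-invariant measure (\cite{Milnor}), so by the averaging trick of Fino and Grantcharov (\cite{FG, EFV}) it suffices to rule out \emph{left-invariant} Hermitian-symplectic metrics. The problem then becomes purely algebraic on the Chevalley-Eilenberg complex of $\mathfrak{g}$: I look for a complex structure $J$ and a positive left-invariant $(1,1)$-form $\omega$ admitting some $\alpha\in\Lambda^{2,0}\mathfrak{g}^*$ with
\be
\partial\alpha=0,\qquad \partial\omega=-\bar\partial\alpha,
\ee
these being exactly the $(3,0)$ and $(2,1)$ components of $d(\alpha+\omega+\bar\alpha)=0$. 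The target is to show that any such solution forces every bracket of $\mathfrak{g}$ to vanish.

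The one global fact I would carry over is that $\Omega=\alpha+\omega+\bar\alpha$ is a genuine symplectic form on the compact $M^n$. Hence $\int_M\Omega^n>0$, whereas $\int_M d\eta=0$ for every left-invariant form $\eta$; consequently $\Omega$ cannot be the differential of a left-invariant $1$-form, i.e. $[\Omega]\neq0$ in $H^2(\mathfrak{g})$. The whole strategy is to play this non-exactness against the taming positivity $\omega>0$: under each hypothesis I would show that a nonabelian $\mathfrak{g}$ forces $\Omega$ to become exact (or $\omega$ to degenerate), a contradiction.

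For the abelian complex structure (ii) the $(1,0)$ vectors commute, which drastically truncates $\partial$ and $\bar\partial$; one can then read off the equation $\partial\omega=-\bar\partial\alpha$ component by component and conclude directly. For the nilpotent-complement case (i) I would use the semidirect splitting of $\mathfrak{g}$ along its nilradical and reduce to the Enrietti-Fino-Vezzoni theorem quoted above. For the almost abelian case (iii) I would write $\mathfrak{g}=\mathfrak{u}\rtimes_A\R e$ with $\mathfrak{u}$ an abelian ideal of codimension one and $A=\mathrm{ad}_e|_{\mathfrak{u}}$, fix a unitary coframe adapted to the $J$-invariant part of $\mathfrak{u}$, and convert $\partial\alpha=0$ and $\partial\omega=-\bar\partial\alpha$ into explicit linear relations among $A$, $J$ and the entries of $\omega$. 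Inserting the normal forms from the classification of almost abelian Lie algebras admitting a complex structure then pins down $A$: away from type (I) the positivity of $\omega$ forces $A=0$, hence $\mathfrak{g}$ abelian.

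The main obstacle is the type (I) almost abelian subcase in dimension greater than six. There the normal form of $A$ retains enough parameters that the closedness equations no longer contradict $\omega>0$ outright, and one must combine the fine structure of that normal form with a trace estimate exploiting unimodularity ($\tr A=0$) to eliminate every nonabelian possibility. It is exactly this subcase that accounts for the restriction in the statement to dimension six or to algebras not of type (I).
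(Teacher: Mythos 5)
Your reduction to invariant metrics via the averaging trick matches the paper's setup, and your identification of the Hermitian-symplectic condition with $\partial\alpha=0$, $\overline{\partial}\alpha=-\partial\omega$ is exactly the starting point of the paper's Lemma \ref{lemma3}. But note first that the paper does not prove Theorem \ref{thm2}: it quotes it from Fino--Kasuya--Vezzoni, and its own contribution is an independent re-proof of the almost abelian case (Proposition \ref{prop2}, established as Proposition \ref{prop4} in Section 4), which is therefore the right benchmark. Against that benchmark, your mechanism for case (iii) is wrong. Feeding the almost abelian structure constants (\ref{CandD-aala}) into Lemma \ref{lemma3} yields the system (\ref{eq:13})--(\ref{eq:15}), and in particular $A+A^{\ast}=0$ drops out \emph{before} any use of positivity or unimodularity. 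Since a skew-Hermitian $A$ has purely imaginary spectrum, a Hermitian-symplectic metric forces ${\mathfrak g}$ to be of type (I): the ``not of type (I)'' half of (iii) is settled by this single algebraic identity, not by ``positivity of $\omega$ forcing $A=0$.'' Conversely, in the type (I) case no ``trace estimate exploiting unimodularity'' can eliminate every nonabelian possibility, because the full system is solvable on nonabelian unimodular algebras: taking $\lambda=0$, $v=0$ and $A\neq 0$ skew-Hermitian (a rotation) even produces left-invariant K\"ahler, hence Hermitian-symplectic, metrics. What the invariant computation actually yields is the paper's conclusion: unimodularity (Lemma \ref{lemma5}(i)) combined with $A+A^{\ast}=0$ gives $\lambda=0$; then, using that $I-S'\overline{S'}>0$ commutes with $A$, one shows $v\in\mbox{Im}(A)$; and a frame change as in Lemma \ref{lemma5b} converts the metric into a K\"ahler one. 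Upgrading ``admits a K\"ahler metric'' to ``$G$ abelian,'' as in the FKV statement under the dimension-$6$ or type hypotheses, requires lattice-level and classification input that pure left-invariant linear algebra cannot supply; your outline has no access to that step.

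Cases (i) and (ii) are not proved at all in your proposal. For abelian $J$, ``read off the equation component by component and conclude directly'' is the entire content of FKV's theorem, not a proof; abelian $J$ makes ${\mathfrak g}$ two-step solvable (\cite{ABD}), but the conclusion that $G$ is abelian is a substantive argument, not a formal truncation of $\partial$ and $\overline{\partial}$. For (i), a nilpotent complement only satisfies ${\mathfrak c}+{\mathfrak n}={\mathfrak g}$ with the sum not necessarily direct (as the paper notes right after Theorem \ref{thm2}), so there is no ``semidirect splitting along the nilradical'' to invoke, and the Enrietti--Fino--Vezzoni theorem applies to nilpotent $G$ only; transferring the Hermitian-symplectic structure from the solvable ${\mathfrak g}$ to a nilpotent object is precisely the hard step you would need to carry out. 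Finally, your announced global strategy --- playing $[\Omega]\neq 0$ in $H^2({\mathfrak g})$ against the taming positivity --- is a sound observation but is never used in the rest of your outline and plays no role in either the paper's Section 4 argument or the quoted results; the operative tool throughout is the explicit encoding of the Hermitian-symplectic condition as the two linear systems of Lemma \ref{lemma3} in the skew-symmetric matrix $S$.
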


Recall that for a given solvable Lie algebra ${\mathfrak g}$, a  nilpotent subalgebra ${\mathfrak c}\subset {\mathfrak g}$ is called a {\em nilpotent complement,} if ${\mathfrak c} + {\mathfrak n} = {\mathfrak g}$ (not necessarily a direct sum), where ${\mathfrak n}$ is the nilradical of ${\mathfrak g}$. A complex structure $J$ on a Lie algebra ${\mathfrak g}$ is said to be {\em abelian,} if $[Jx,Jy]=[x,y]$, $\,\forall\,x,y\in {\mathfrak g}$. In this case ${\mathfrak g}$ is necessarily  $2$-step solvable (\cite[Lemma 2.1]{ABD}). 

A Lie algebra ${\mathfrak g}$ is said to be {\em almost abelian} if it contains an abelian ideal of codimension $1$. The Hermitian geometry of almost abelian groups has been an active topic in recent years, see for instance \cite{AO, AL, Bock, CM, FP1, FP2, GuoZ, LW, Paradiso, Ugarte}. Here by `${\mathfrak g}$ is not of type (I)' we mean there exists $x\in {\mathfrak g}$ such that $\mbox{ad}_x$ has an eigenvalue with non-zero real part.  The extra assumption in part (iii) of Theorem \ref{thm2} was removed by Fino and Paradiso in \cite[Proposition 6.]{FP4}, so {\em  Streets-Tian Conjecture holds for all almost abelian groups.}

In \cite{FK}, utilizing the symplectic reduction theory on Lie algebras, Fino and Kasuya proved the following  beautiful theorem:

\begin{theorem}[{\bf Fino-Kasuya \cite{FK}}] \label{thm3}
Let $M^n=G/\Gamma $ be a complex solvmanifold, namely, a compact quotient of a solvable Lie group by a discrete subgroup and the complex structure (when lifted onto $G$) is left-invariant. Assume that $G$ is completely solvable. If $M^n$ admits a Hermitian-symplectic metric, then $G$ must be abelian (hence $M$ is a complex torus).
\end{theorem}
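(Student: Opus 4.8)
The plan is to transfer the entire problem to the Lie algebra $\mathfrak{g}$ of $G$ and to argue by induction on $\dim_{\mathbb R}\mathfrak{g}=2n$ via symplectic reduction. Since $G$ admits a lattice it is unimodular, and by the averaging trick of Fino--Grantcharov one may assume the Hermitian-symplectic structure is already left-invariant. Thus we are handed a closed, non-degenerate $2$-form $\Omega=\alpha+\omega+\overline{\alpha}\in\Lambda^2\mathfrak{g}^*$ that tames $J$, on a unimodular, completely solvable Lie algebra $\mathfrak{g}$, and the target becomes the purely algebraic statement $[\mathfrak{g},\mathfrak{g}]=0$. The base case $\dim_{\mathbb R}\mathfrak{g}=2$ is immediate, since the only unimodular $2$-dimensional Lie algebra is abelian.

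Complete solvability enters in three distinct places, and I would isolate them at the outset. First, by Hattori's theorem the inclusion of left-invariant forms induces an isomorphism $H^*_{dR}(M)\cong H^*(\mathfrak{g})$ between de Rham cohomology and Chevalley--Eilenberg cohomology, so every global topological constraint—most importantly that $\Omega^n$ represents a nonzero class, guaranteed by unimodularity—can be certified at the level of $\mathfrak{g}$. Second, because $\mathrm{ad}_x$ has only real eigenvalues, $\mathfrak{g}$ possesses a common real eigenvector for the adjoint action, hence a $1$-dimensional ideal $\mathfrak{i}$; such a line is automatically $\Omega$-isotropic. Third, complete solvability is inherited by subalgebras and quotients, which will keep the reduced algebra inside the same class. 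I would also record the trace identities supplied by unimodularity: for unimodular $\mathfrak{g}$ the map $d\colon\Lambda^{2n-1}\mathfrak{g}^*\to\Lambda^{2n}\mathfrak{g}^*$ vanishes, so $d\beta\wedge\Omega^{n-1}=d(\beta\wedge\Omega^{n-1})=0$, equivalently $\Lambda_\Omega\, d\beta=0$, for every $\beta\in\mathfrak{g}^*$; these will furnish the final contractions of the structure constants.

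The reduction step is then standard Marsden--Weinstein, performed on $\mathfrak{g}$. Using the closedness of $\Omega$, which reads $\Omega([x,y],z)+\Omega([y,z],x)+\Omega([z,x],y)=0$, one checks that if $\mathfrak{i}$ is an isotropic ideal then its symplectic orthogonal $\mathfrak{i}^{\perp}$ is a codimension-one subalgebra containing $\mathfrak{i}$: for $x,y\in\mathfrak{i}^{\perp}$ and $z\in\mathfrak{i}$ the last two terms vanish because $[y,z],[z,x]\in\mathfrak{i}$, forcing $\Omega([x,y],z)=0$. Hence $\mathfrak{g}_{\mathrm{red}}:=\mathfrak{i}^{\perp}/\mathfrak{i}$ is a solvable, completely solvable Lie algebra of real dimension $2n-2$ carrying the induced non-degenerate closed $2$-form $\overline{\Omega}$. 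If one can arrange that $\mathfrak{g}_{\mathrm{red}}$ again carries a \emph{taming} complex structure compatible with $\overline{\Omega}$, the inductive hypothesis applies and one can bootstrap back, using the trace identities above to conclude that the structure constants crossing $\mathfrak{i}$ also vanish, so that $\mathfrak{g}$ itself is abelian.

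The hard part will be precisely the interaction of $J$ with this reduction. A complex line is never $\Omega$-isotropic, since the taming condition gives $\Omega(x,Jx)=\omega(x,Jx)=g(x,x)>0$; consequently there is no $J$-invariant isotropic ideal, and the $1$-dimensional ideal $\mathfrak{i}$ above can never be $J$-invariant. Thus $J$ does not descend to $\mathfrak{g}_{\mathrm{red}}$ in any naive way, and the Hermitian-symplectic (as opposed to merely symplectic) structure is not automatically inherited; moreover unimodularity need not survive passage to $\mathfrak{i}^{\perp}/\mathfrak{i}$, so the trace identities may have to be applied relative to the ambient $\mathfrak{g}$ rather than the reduced one. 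I expect this compatibility bookkeeping—correcting or replacing $J$ on $\mathfrak{g}_{\mathrm{red}}$ using the positivity of $\omega$ read against the flag of ideals furnished by complete solvability, while tracking unimodularity—to be the main obstacle of the argument, the resolution of which is exactly what makes symplectic reduction on Lie algebras the right machine for this theorem.
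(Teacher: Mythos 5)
First, note that the paper does not prove Theorem \ref{thm3} at all: it is imported verbatim from Fino--Kasuya \cite{FK}, with the paper only remarking that their proof ``utilizes the symplectic reduction theory on Lie algebras.'' So your proposal is being measured against the original \cite{FK} argument, and at the level of strategy you have correctly reconstructed it: averaging to reduce to an invariant tamed form on a unimodular completely solvable $\mathfrak{g}$, the existence of a one-dimensional ideal $\mathfrak{i}$ from real eigenvalues of $\mathrm{ad}$, the verification via $d\Omega=0$ that $\mathfrak{i}^{\perp}$ is a subalgebra so that $\mathfrak{i}^{\perp}/\mathfrak{i}$ is a symplectic reduction, and induction on dimension. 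These steps are all sound as stated. But what you have written is a plan, not a proof: the entire analytic content of \cite{FK} is precisely the step you defer --- showing that the reduced algebra inherits an \emph{integrable} complex structure tamed by $\overline{\Omega}$, and bootstrapping ``$\mathfrak{g}_{\mathrm{red}}$ abelian'' back to ``$\mathfrak{g}$ abelian.'' You candidly call this ``the main obstacle,'' which is accurate, but acknowledging the gap does not fill it. Since $\Omega(x,Jx)>0$ forces $J\mathfrak{i}\cap\mathfrak{i}^{\perp}=0$, the structure $J$ neither preserves $\mathfrak{i}^{\perp}$ nor descends, and constructing a taming integrable substitute on the quotient (Fino--Kasuya's reduction lemmas) is where all the work lies.

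There is also a concrete structural flaw in the induction as you have set it up, beyond the admitted one. Your inductive hypothesis is ``unimodular, completely solvable, tamed $\Rightarrow$ abelian,'' but you yourself observe that unimodularity need not survive passage to $\mathfrak{i}^{\perp}/\mathfrak{i}$ --- and without unimodularity the inductive statement is simply \emph{false}: the two-dimensional non-unimodular algebra $\mathbb{R}\ltimes\mathbb{R}$ (the hyperbolic plane) is completely solvable, non-abelian, and carries a K\"ahler, hence Hermitian-symplectic, structure. So the moment one reduction step lands in a non-unimodular algebra, the hypothesis you would need is no longer available, and your suggestion to ``apply the trace identities relative to the ambient $\mathfrak{g}$'' is not a mechanism --- the identities $\Lambda_\Omega\,d\beta=0$ live on $\mathfrak{g}$ and do not by themselves constrain the reduced bracket. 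Any correct version must reformulate the inductive statement (or choose $\mathfrak{i}$ more carefully, e.g.\ inside the nilradical, tracking how unimodularity interacts with the reduction), which is again exactly the bookkeeping carried out in \cite{FK}. Finally, even granting the reduced algebra abelian, concluding that $\mathfrak{g}$ itself is abelian is not a formality: the paper's own Sections 4--5 show, in the much more restrictive almost abelian and codimension-two-abelian-ideal cases, how much explicit structure-constant analysis (equations such as (\ref{eq:13})--(\ref{eq:15}) and (\ref{eq:24})--(\ref{eq:27})) is needed to pass from the tamed condition to K\"ahler; your sketch replaces all of that with a single sentence.
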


Recall that  a Lie algebra ${\mathfrak g}$ is said to be {\em completely solvable,} if $\mbox{ad}_x$ has only real eigenvalues for any $x\in {\mathfrak g}$. Clearly, completely solvable Lie algebras are always solvable, and nilpotent Lie algebras are completely solvable.

As a natural generalization to almost abelian Lie algebras, we consider Lie algebras ${\mathfrak g}$ which contain an abelian ideal ${\mathfrak a}$ of codimension $2$. The complex structures on  ${\mathfrak g}$  can be divided into two subsets: those with $J{\mathfrak a} ={\mathfrak a}$  and those with $J{\mathfrak a}\neq {\mathfrak a}$.  The second result of this article is to confirm Streets-Tian Conjecture for Lie-complex manifolds where ${\mathfrak g}$ contains an abelian ideal of codimension $2$ that is $J$-invariant:

\begin{proposition} \label{prop3}
Let $M^n=G/\Gamma $ be a compact complex manifold which is the quotient of a Lie group $G$ by a discrete subgroup $\Gamma$, where the complex structure $J$ of $M$ (when lifted onto $G$) is left-invariant. Assume that the Lie algebra ${\mathfrak g}$ of $G$ contains an abelian ideal ${\mathfrak a}$  of codimension $2$ with $J{\mathfrak a}={\mathfrak a}$. If  $M^n$ admits a Hermitian-symplectic metric, then it must admit a (left-invariant) K\"ahler metric.
\end{proposition}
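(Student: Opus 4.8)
The plan is to reduce everything to the Lie algebra $\mathfrak{g}$ and then to solve the Hermitian-symplectic equations explicitly in an adapted coframe. Since $G$ admits a compact quotient it is unimodular (\cite{Milnor}), so by the Fino--Grantcharov averaging trick \cite{FG, EFV} it suffices to produce a left-invariant K\"ahler metric once a left-invariant Hermitian-symplectic structure is given. Because $\mathfrak{a}$ is $J$-invariant, abelian, and an ideal of complex codimension $1$, I would fix a unitary coframe $\{\varphi^1,\dots,\varphi^n\}$ in which $\varphi^1,\dots,\varphi^{n-1}$ span the dual of $\mathfrak{a}^{1,0}$ and $\varphi^n$ spans the complementary direction. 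Two structural facts drive the argument. First, a form annihilating $\mathfrak{a}$ can only differentiate into the top exterior power of the $2$-real-dimensional quotient $\mathfrak{g}/\mathfrak{a}$, so $d\varphi^n=\lambda\,\varphi^n\wedge\overline{\varphi^n}$ for a constant $\lambda$; in particular $\tfrac{\I}{2}\varphi^n\wedge\overline{\varphi^n}$ is already a closed (semi-positive) $(1,1)$-form. Second, since $[\mathfrak{a},\mathfrak{a}]=0$, each $d\varphi^i$ with $i\le n-1$ contains no $\varphi^j\wedge\varphi^k$ or $\varphi^j\wedge\overline{\varphi^k}$ term with $j,k\le n-1$, so
\[
d\varphi^i=\varphi^n\wedge\Big(\sum_j P^i_j\varphi^j+\sum_j Q^i_j\overline{\varphi^j}\Big)+\overline{\varphi^n}\wedge\Big(\sum_j R^i_j\varphi^j+\sum_j S^i_j\overline{\varphi^j}\Big)+c_i\,\varphi^n\wedge\overline{\varphi^n}.
\]
Here the matrices $P,Q,R,S$ record the complex-linear and conjugate-linear parts of $\mathrm{ad}_{Z_n}$ and $\mathrm{ad}_{\overline{Z_n}}$ acting on $\mathfrak{a}$, and they are constrained both by the integrability of $J$ (the Nijenhuis identity relates the conjugate-linear parts) and by unimodularity, which forces $\mathrm{tr}_{\mathbb{C}}\,\mathrm{ad}_{Z_n}=0$.

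Next I would write $\Omega=\alpha+\omega+\overline{\alpha}$ with $\omega=\tfrac{\I}{2}\sum_a\varphi^a\wedge\overline{\varphi^a}$ and $\alpha$ a constant $(2,0)$-form, and expand $d\Omega=0$ by bidegree. The $(3,0)$-part gives $\partial\alpha=0$, which, using $\partial\varphi^n=0$ and $\partial\varphi^i=\varphi^n\wedge\sum_k P^i_k\varphi^k$, reduces to a Sylvester-type compatibility between $P$ and the $\mathfrak{a}$-component of $\alpha$. The decisive equation is the $(2,1)$-part $\partial\omega+\overline{\partial}\alpha=0$, which says precisely that $\partial\omega$ is $\overline{\partial}$-exact; computing $\partial\omega$ from the structure equations above expresses it through the matrices $P,Q,R$ and the constants $c_i$, and matching it against $\overline{\partial}\alpha$ produces explicit linear relations tying the torsion data to the components of $\alpha$.

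The heart of the matter, and the step I expect to be the main obstacle, is to convert these relations, together with unimodularity and the positivity of $\omega$, into the existence of a compatible closed $(1,1)$-form. A routine computation shows that a compatible metric $\eta=\tfrac{\I}{2}\sum h_{a\overline b}\varphi^a\wedge\overline{\varphi^b}$ is K\"ahler exactly when the $P$- and $R$-contributions to $\partial\eta$ cancel one another and the remaining $Q,S,c,\lambda$-terms vanish; for block-diagonal $h$ this amounts to a relation of the form $R=-(h')^{-1}P^{\ast}h'$ on the $\mathfrak{a}$-block $h'$. (That such non-abelian configurations genuinely occur is seen already for $\mathrm{ad}_{X}|_{\mathfrak a}=0$ and $\mathrm{ad}_{JX}|_{\mathfrak a}$ skew-Hermitian, which is unimodular, non-abelian, and K\"ahler; this is why the conclusion is ``admits a K\"ahler metric'' rather than ``$G$ is abelian''.) I would therefore show that the Hermitian-symplectic equations force exactly this configuration: after using $d^2=0$ (the Jacobi identity) to couple $P,Q,R,S,c$, the $\overline{\partial}$-exactness of $\partial\omega$ is rigid enough to pin $\mathrm{ad}_{Z_n}|_{\mathfrak{a}}$ down to a skew-Hermitian-type normal form and to annihilate the genuinely obstructing conjugate-linear pieces. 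The subtlety is that one cannot expect $\omega$ itself to close up; instead one must isolate which quadratic combinations of the torsion are forced to vanish, and it is here that the compactness/unimodularity input is indispensable.

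Finally, once the torsion is reduced to this K\"ahler-compatible normal form, I would exhibit the metric explicitly: choose $h$ so that the cancellation in $\partial\eta$ is realized, if necessary deforming $\omega$ within the family of compatible $(1,1)$-forms (the deformation pathway advertised in the introduction), and then verify $d\eta=0$ directly from the reduced structure equations, using that $\tfrac{\I}{2}\varphi^n\wedge\overline{\varphi^n}$ is already closed. This yields a left-invariant K\"ahler metric on $M^n$, which completes the proof.
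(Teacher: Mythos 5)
Your setup is sound and parallels the paper's: reduction to left-invariant data via unimodularity and the Fino--Grantcharov averaging trick, an adapted unitary coframe in which the transverse direction satisfies $d\varphi^n=\lambda\,\varphi^n\wedge\overline{\varphi^n}$, structure matrices recording $\mathrm{ad}$ on $\mathfrak{a}$, and the bidegree expansion $\partial\alpha=0$, $\overline{\partial}\alpha=-\partial\omega$ of $d\Omega=0$ (this is exactly the paper's Lemma~\ref{lemma3}, yielding its equations (\ref{eq:24})--(\ref{eq:27}) in the codimension-two setting). But the proposal stops precisely where the proof begins. The step you yourself flag as ``the main obstacle'' --- showing that the Hermitian-symplectic relations, coupled with the Jacobi identity and unimodularity, pin the structure down to a K\"ahler-compatible normal form --- is the entire content of the paper's argument, and nothing in your text indicates how to carry it out. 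Concretely, in the paper's notation ($X,Y,Z,v,\lambda$ with Bianchi system (\ref{XYZ})), one must: (i) combine the symplectic equations with (\ref{XYZ}) to derive the matrix identity $^t\!Z\overline{Z}+(X-Y)^{\ast}(X-Y)=\lambda(X+X^{\ast})+[X^{\ast},X]$; (ii) prove $\lambda=0$, which is genuinely delicate --- the paper introduces the symmetric matrices $A=Z+\sqrt{-1}S'\overline{Y}$ and $B=HYS'$ with $H=I+S'S'^{\ast}>0$, shows $\lambda=\mbox{tr}(S'S'^{\ast}Y)=\mbox{tr}(S'^{\ast}H^{-1}B)$, and concludes this vanishes because $S'^{\ast}H^{-1}$ is skew-symmetric while $B$ is symmetric; (iii) take the trace of the identity in (i) to force $Z=0$, $X=Y$, and $X$ normal; and (iv) show the residual equation $S'\overline{v}+X^{\ast}u=\sqrt{-1}v$ forces $v\in \mbox{Im}(X)$ via a block decomposition after diagonalizing $X$, so that the frame change $\tilde{e}_1=e_1+\sum_k a_ke_k$ with $v=-X\overline{a}$ produces a metric with vanishing $v$, which is K\"ahler. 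Your appeal to ``rigidity of $\overline{\partial}$-exactness'' is a hope, not an argument, and there is no a priori reason the linear relations you extract would close up without the quadratic Bianchi input used in (i)--(iii).

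Two smaller inaccuracies: the $(0,2)$-components $S^i_j$ in your structure equation for $d\varphi^i$ vanish identically by integrability of $J$ (a $(1,0)$-coframe on a complex Lie algebra has $d\varphi^i\in\Lambda^{2,0}\oplus\Lambda^{1,1}$), so they should not appear even hedged; and unimodularity does not give $\mbox{tr}_{\mathbb C}\,\mathrm{ad}_{Z_n}=0$ but rather the relation $\lambda=\mbox{tr}(X)-\mbox{tr}(Y)$ among the structure matrices (paper's Lemma~\ref{lemma7}(i)), which is exactly what feeds into step (ii) above. Finally, ``deforming $\omega$ within the family of compatible $(1,1)$-forms'' needs to be made concrete: the deformation is not a choice of Hermitian form on a fixed frame but the specific basis change absorbing $v$, and one must verify (as the paper does via its change-of-frame formulas (\ref{XYZchange})) that this change preserves $\lambda=0$, $Z=0$, and the normality of $X=Y$.
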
 

Note that when ${\mathfrak g}$ contains an abelian ideal of codimension $2$, then it is always solvable of step at most $3$, but it is not always $2$-step solvable or completely solvable. The approach is based on our previous work \cite{GuoZ}. The $J{\mathfrak a}\neq {\mathfrak a}$ case is more complicated algebraically, and in \cite{CaoZ} we were able to prove Fino-Vezzoni Conjecture (see below) for such Lie algebras, based on some partial description of balanced and pluriclosed metrics on such Lie algebras.

Lie-complex manifolds form a large set of compact complex manifolds, with adequate algebraic diversity. By focusing on the above type of special solvable Lie algebras and taking an elementary-style explicit approach, we want to better understand the algebraic complexity and subtlety in Streets-Tian Conjecture, and we hope it may shed some light towards the quest of the conjecture on other types of Lie-complex manifolds or more general non-K\"ahlerian manifolds.

Finally let us address the apparent resemblance between Streets-Tian Conjecture and the {\em Fino-Vezzoni Conjecture} (\cite{FinoVezzoni15, FinoVezzoni}), which states that a compact complex manifold admitting both a pluriclosed metric and a balanced metric must be K\"ahlerian. Sadly, even the `intersection' of the two conjectures is still open to the best of our knowledge:

\begin{conjecture} [{\bf Streets-Tian $\cap$  Fino-Vezzoni}]
If a compact complex manifold admits a Hermitian-symplectic metric and a balanced metric, then it must be K\"ahlerian.
\end{conjecture}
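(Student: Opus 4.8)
The plan is to exploit the fact that a Hermitian-symplectic metric is in particular pluriclosed, so this is the special case of the Fino--Vezzoni conjecture in which the pluriclosed metric carries an extra closed taming $2$-form, and to use that form together with the balanced metric. Write the Hermitian-symplectic data as a closed real $2$-form $\Omega=\alpha+\omega+\overline{\alpha}$, where $\alpha$ is a $(2,0)$-form and $\omega$ is the positive K\"ahler form; decomposing $d\Omega=0$ by bidegree gives $\partial\alpha=0$ and $\partial\omega=-\overline{\partial}\alpha$. Let $\omega_b$ be the balanced metric, so that $\omega_b^{\,n-1}$ is a $d$-closed strongly positive $(n-1,n-1)$-form. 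To prove $M^n$ is K\"ahlerian I would argue by contradiction through the Harvey--Lawson characterization: if $M^n$ is not K\"ahler, there is a nonzero positive current $T$ of bidimension $(1,1)$ (bidegree $(n-1,n-1)$) that is the $(n-1,n-1)$-component of a boundary, i.e. $T=\partial S+\overline{\partial S}$ for a current $S$ of bidimension $(2,1)$.

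\textbf{The pairing identity.} First I would pair $T$ against the taming form. Since $T$ has bidegree $(n-1,n-1)$ and $\Omega$ is a $2$-form, only the $(1,1)$-part contributes, and positivity of $\omega$ gives $\langle T,\Omega\rangle=\langle T,\omega\rangle>0$. On the other hand, with $R=S+\overline S$ one has $T=[dR]_{(n-1,n-1)}$, and closedness of $\Omega$ yields $\langle dR,\Omega\rangle=\pm\langle R,d\Omega\rangle=0$ on the compact manifold $M^n$. A bidegree count of the surviving terms in $\langle dR,\Omega\rangle$ produces an identity of the form
\begin{equation*}
0=\langle dR,\Omega\rangle=\langle T,\omega\rangle+2\,\re\,\langle \overline{\partial}S,\alpha\rangle,
\end{equation*}
so that $\langle T,\omega\rangle=-2\,\re\langle\overline{\partial}S,\alpha\rangle$. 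This is precisely why the naive taming-form argument fails for Streets--Tian alone: the nonzero $(2,0)$-torsion $\alpha$ absorbs the positivity, and a contradiction would follow the moment one could arrange $\overline{\partial}S=0$ (or merely $\re\langle\overline{\partial}S,\alpha\rangle\ge 0$).

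\textbf{Where the balanced metric should enter.} The role I would assign to $\omega_b$ is to upgrade the boundary current $T=\partial S+\overline{\partial S}$ to a $\partial\overline{\partial}$-exact one, $T=\I\,\partial\overline{\partial}U+\text{conj.}$, equivalently to permit the choice $S=\overline{\partial}U$. For such $S$ one has $\overline{\partial}S=0$, the $\alpha$-term drops out (using also $\partial\alpha=0$), and $\langle T,\omega\rangle=0$ forces $T=0$, the desired contradiction. Concretely I would run Michelsohn's balanced-cone duality in parallel with the Harvey--Lawson K\"ahler-cone duality, the hope being that on a balanced manifold every obstructing positive current of K\"ahler type is in fact $\partial\overline{\partial}$-exact. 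An equivalent packaging is to try to show that a compact balanced manifold carrying a Hermitian-symplectic metric lies in Fujiki class $\mathcal C$ (or satisfies a $\partial\overline{\partial}$-lemma at the level of currents), after which the conclusion is immediate from \cite{Chiose}, since Hermitian-symplectic implies pluriclosed.

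\textbf{The main obstacle.} The crux, and the reason the statement remains open, is exactly this upgrade from $d$-exactness to $\partial\overline{\partial}$-exactness of the obstructing current: it is a $\partial\overline{\partial}$-lemma-type property, and there is no known mechanism by which the balanced condition on $\omega_b$ controls the $(2,0)$-torsion $\alpha$ of the a priori unrelated Hermitian-symplectic metric $\omega$ of \cite{ST}. That the two metrics need not coincide is the essential difficulty: if one could take $\omega=\omega_b$, then a single metric that is simultaneously balanced and pluriclosed is K\"ahler by a standard integration by parts together with the primitive decomposition of $\partial\omega$ (forcing $\partial\omega=0$), but no construction is known that deforms the given balanced and Hermitian-symplectic metrics toward a common one. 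Coupling the balanced structure to $\alpha$ --- whether through the current-theoretic upgrade or the Fujiki-class-$\mathcal C$ reduction --- is the hard part that remains.
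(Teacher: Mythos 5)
You were not given a theorem here: the statement is presented in the paper as an explicitly \emph{open} conjecture (the ``intersection'' of Streets--Tian and Fino--Vezzoni), which the authors record precisely because, as they say, even this weaker statement is open to the best of their knowledge. So there is no proof in the paper to compare against, and your proposal --- to its credit, candidly --- does not close the conjecture either; it is an obstruction analysis, not a proof. Within that scope your computations are correct: a Hermitian-symplectic metric satisfies $\partial\alpha=0$ and $\partial\omega=-\overline{\partial}\alpha$, hence is pluriclosed; by Harvey--Lawson, non-K\"ahlerness yields a nonzero positive current $T$ of bidimension $(1,1)$ with $T=\partial S+\overline{\partial S}$ for $S$ of bidegree $(n-2,n-1)$; and pairing $dR$, $R=S+\overline{S}$, against the closed taming form $\Omega=\alpha+\omega+\overline{\alpha}$ gives exactly the identity $0=\langle T,\omega\rangle+2\,\re\langle\overline{\partial}S,\alpha\rangle$, which correctly isolates why the naive taming argument proves nothing once $\alpha\neq 0$. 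Your side remarks are also accurate: a single metric that is simultaneously balanced and pluriclosed is K\"ahler, and a reduction to Fujiki class ${\mathcal C}$ would conclude via Chiose's theorem, since Hermitian-symplectic implies pluriclosed.

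The genuine gap is the one you yourself flag, and it is worth naming why it cannot be repaired by the tools you invoke. Upgrading $T$ from a component of a boundary to an $\sqrt{-1}\,\partial\overline{\partial}$-exact current is a $\partial\overline{\partial}$-lemma-type statement, and the balanced hypothesis acts on the \emph{wrong} cone to supply it: Michelsohn's duality for the balanced metric $\omega_b$ controls positive currents of bidegree $(1,1)$ (bidimension $(n-1,n-1)$) that are components of boundaries, which is the bidegree complementary to that of the Harvey--Lawson obstruction $T$; the two dualities simply do not couple, and nothing ties $\omega_b$ to the $(2,0)$-form $\alpha$ of the a priori unrelated Hermitian-symplectic metric. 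Note also that the paper's actual results in this direction proceed by an entirely different, elementary route --- explicit admissible frames and structure constants on special classes (Chern K\"ahler-like, non-balanced BTP, Lie algebras with $J$-invariant abelian ideals of codimension $2$), where a Hermitian-symplectic metric is shown directly to force $\lambda=0$, $Z=0$, $X=Y$ normal and $v\in\mathrm{Im}(X)$, after which a frame change produces a K\"ahler metric --- so if you want a provable statement rather than a program, the viable move is to restrict to such a class, not to pursue the current-theoretic upgrade in general.
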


We mention this statement in the hope that, at least for some Lie-complex manifolds $G/\Gamma$, this weaker statement might be easier to verify than Streets-Tian or Fino-Vezzoni conjectures. 

Finally, a referee brought the following two results to our attention, which are closely related to the Streets-Tian conjecture. One is by Angella, Otal, Ugarte, Villacampa \cite[Proposition 2.2]{AOUV17}, which states that the Streets-Tian conjecture holds for any complex solvmanifold $M=G/\Gamma$ with left-invariant complex structure of splitting type on $G={\mathbb C}\ltimes {\mathbb C}^{n-1}$. The other one is due to Dinew and Popovici \cite{DP}, in which they introduced an energy functional acting on the metrics in the Aeppli cohomology class of a Hermitian-symplectic metric, and their Theorem 1.4 states that in complex dimension $n=3$ the critical points of the functional (if exists) are K\"ahler metrics.

\vspace{0.3cm}

\section{Non-balanced Bismut torsion parallel manifolds}

The goal of this section is to prove Propositions \ref{prop0} and \ref{prop1}. We will follow the notations in \cite{VYZ} and \cite{GuoZ}.  Let $(M^n,g)$ be a compact Hermitian manifold and $\omega $ be its K\"ahler form. Denote by $\nabla$ the Chern connection, and by $T$, $R$ its torsion and curvature tensor, respectively. Suppose that  $\{ e_1, \ldots , e_n\}$ is a local unitary frame and $\{ \varphi_1, \ldots , \varphi_n\}$ is the coframe of $(1,0)$-forms dual to $e$. Let $\theta$, $\Theta$ be the matrices of connection and curvature for $\nabla$ under the frame $e$, namely, $\nabla e_i = \sum_j \theta_{ij} e_j$, $\Theta_{k\ell }(e_i, \bar{e}_j)= R_{i\bar{j}k\bar{\ell}}$, then the structure equations are
$$ d\varphi = - \,^t\!\theta \wedge \varphi + \tau, \ \ \ \ \ \ \ \Theta = d\theta - \theta \wedge \theta, $$
where $\varphi$, $\tau$ are column vectors, with $\tau_i = \frac{1}{2}\sum_{j,k}T^i_{jk}\varphi_j \wedge \varphi_k$ and $T(e_j, e_k) = \sum_i T^i_{jk}e_i$. Since the K\"ahler form is $\omega = \sqrt{-1}\sum_i \varphi_i \wedge \overline{\varphi}_i$, we get from the structure equations that
\begin{equation} \label{eq:ddbaromega}
 \sqrt{-1} \partial \overline{\partial} \omega \ = \ ^t\!\tau \wedge \overline{\tau} + \,^t\!\varphi \wedge \Theta \wedge \overline{\varphi} \ = \ \frac{1}{4}\sum_{i,j,k,\ell} P^{j\ell}_{\,ik}  \, \varphi_i \wedge \varphi_k \wedge \overline{\varphi}_j \wedge \overline{\varphi}_{\ell} , 
 \end{equation}
 where
 \begin{equation}
 P^{j\ell}_{\,ik}  \ = \  \sum_r T^r_{ik} \overline{T^r_{j\ell} }  \ - R_{i\bar{j}k\bar{\ell}} +  R_{k\bar{j}i\bar{\ell}} - R_{k\bar{\ell}i\bar{j}} + R_{i\bar{\ell}k\bar{j}}.  \nonumber 
\end{equation}
Recall that a Hermitian manifold $(M^n,g)$ is said to be {\em Chern K\"ahler-like} if its Chern curvature tensor $R$ obeys the symmetry condition $R_{i\bar{j}k\bar{\ell}} = R_{k\bar{j}i\bar{\ell}}$ for any $1\leq i,j,k,\ell \leq n$. In particular, Chern flat manifolds are Chern K\"ahler-like. We are now ready to prove Proposition \ref{prop0}, which says that any compact Chern K\"ahler-like manifold, if the metric itself is not K\"ahler, cannot admit any Hermitian-symplectic or astheno-K\"ahler metric.

\begin{proof}[{\bf Proof of Proposition \ref{prop0}.}]  Let $(M^n,g)$ be a compact Hermitian manifold. Assume that $g$ is Chern K\"ahler-like and is not K\"ahler. As $g$ is Chern K\"ahler-like, its K\"ahler form $\omega$ satisfies the condition $\sqrt{-1} \partial \overline{\partial} \omega \ = \ ^t\!\tau \wedge \overline{\tau}$ by (\ref{eq:ddbaromega}). If $g_0$ is a Hermitian-symplectic metric on $M^n$, then by definition we would have a global $(2,0)$-form $\alpha$ on $M^n$ so that the $2$-form $\Omega = \alpha + \omega_0 + \overline{\alpha}$ is closed, where $\omega_0$ is the K\"ahler form of $g_0$. If we take the $(n-2,n-2)$-part of $\Omega^{n-2}$, we get
$$ (\Omega^{n-2})^{n-2,n-2} = \omega_0^{n-2} + C^2_{n-2}C^1_2 \omega_0^{n-4}\alpha \overline{\alpha} + C^4_{n-2}C^2_4 \omega_0^{n-6}(\alpha \overline{\alpha})^2 + \cdots \geq \omega_0^{n-2}. $$
Since $\Omega^{n-2}$ is closed, we have 
$$ 0=\int_M \sqrt{-1} \partial \overline{\partial} \omega \wedge \Omega^{n-2} = \int_M \,^t\!\tau \wedge \overline{\tau} \wedge (\Omega^{n-2})^{n-2,n-2} \geq  \int_M \,^t\!\tau \wedge \overline{\tau} \wedge  \omega_0^{n-2} > 0, $$
which is a contradiction, so $M^n$ cannot admit any Hermitian-symplectic metric. Similarly,  if $M^n$ admits an astheno-K\"ahler metric $g_1$ in the sense of Jost-Yau \cite{JostYau}, namely, its K\"ahler form $\omega_1$ obeys $\partial \overline{\partial} (\omega_1^{n-2})=0$, then we would have 
$$ 0=\int_M \sqrt{-1} \partial \overline{\partial} \omega \wedge \omega_1^{n-2} = \int_M \,^t\!\tau \wedge \overline{\tau} \wedge \omega_1^{n-2}  > 0, $$
again a contradiction. So $M^n$ does not admit any astheno-K\"ahler metric either. This completes the proof of the proposition.
\end{proof}

As we have mentioned in the introduction, the important special case of the above is when $(M^n,g)$ is Chern flat. In this case the result is due to  Di Scala-Lauret-Vezzoni in \cite[Proposition 3.3]{DLV}. The Chern flat case is also a consequence of the result of Fino-Kasuya-Vezzoni \cite[Corollary 3.2]{FKV} which states that compact Chern flat manifolds can not admit any pluriclosed metric unless the complex Lie group is nilpotent, and in  the nilpotent case the Streets-Tian Conjecture is already established.

\vspace{0.1cm}

Next let us recall the definition of {\em Bismut torsion parallel (BTP)} manifolds. Given a Hermitian manifold $(M^n,g)$, Bismut \cite{Bismut} proved that there exists a unique connection $\nabla^b$ on $M^n$ that is compatible with both the metric $g$ and the almost complex structure $J$, and whose torsion $T^b$ (when lowered into a tensor of type $(3,0)$ by the metric) is totally skew-symmetric. Denote by $H$ this $3$-form, namely, 
$$ H(x,y,z) = \langle T^b(x,y), z\rangle $$
for any vector fields $x,y,z$ on $M$, where $g = \langle ,\rangle $ is the metric. As is well-known, $dH=0$ if and only if $\partial \overline{\partial} \omega =0$. So Bismut connection is very natural to use when exploring Hermitian geometric questions involving pluriclosed metrics, etc.. 

As introduced in \cite{ZhaoZ22, ZhaoZ24}, we will call Hermitian manifolds satisfying $\nabla^b T^b=0$ {\em Bismut torsion parallel} manifolds, or {\em BTP manifolds} for brevity. When $n=2$, compact BTP surfaces are exactly Vaisman surfaces, which are fully classified by Belgun \cite{Belgun} in 2000. When $n\geq 3$, BTP manifolds form a relatively rich class of special Hermitian manifolds which includes all Bismut flat manifolds (and more generally, all Bismut K\"ahler-like manifolds), all Vaisman manifolds, plus other examples. A detailed description/classification of such manifolds in dimension $3$ is given in \cite{ZhaoZ22, ZhaoZ24}, but for higher dimensions this is still widely unknown. Here for our purpose of verifying the Streets-Tian Conjecture, let us first collect some basic properties for BTP manifolds that will be used later. 

Recall that {\em Gauduchon's torsion $1$-form} (\cite{Gau84}) of a given Hermitian manifold $(M^n,g)$ is the global $(1,0)$-form $\eta$ defined by $\partial (\omega^{n-1})=-\eta \wedge \omega^{n-1}$. Under a local unitary frame $e$, we have $\eta = \sum_i \eta_i \varphi_i$, $\eta_i = \sum_k T^k_{ki}$ where $\varphi$ is the local unitary coframe dual to $e$ and $T^j_{ik}$ are Chern torsion components under $e$ defined by 
$$ T(e_i, e_k) = \sum_i T^j_{ik} e_j, \ \ \ \ \ \ \forall \ 1\leq i,k\leq n. $$
As mentioned before, a Hermitian metric $g$ is said to be {\em balanced} if $d(\omega^{n-1})=0$, or equivalently, if $\eta =0$. It is a natural generalization to the K\"ahler condition which is $d\omega =0$, and balanced manifolds form an important class of special Hermitian manifolds and have been extensively studied in the past a few decades. When $n\geq 3$, BTP metrics can be balanced, although the `majority' of them tend to be non-balanced. Non-balanced BTP manifolds include Bismut K\"ahler-like and Vaisman as disjoint subsets. A distinctive property enjoyed by all non-balanced BTP manifolds is the following

\begin{lemma}[\cite{ZhaoZ24}] \label{lemma2}
Let $(M^n,g)$ be a non-balanced BTP manifold. Then given any $p\in M$, there exists a local unitary frame $e$ in a neighborhood of $p$ such that
$$ \eta = \lambda \varphi_n, \ \ \ \partial \eta =0, \ \ \ \overline{\partial} \eta =-\lambda \sum_k \overline{a}_k \varphi_k \overline{\varphi}_k,  \ \ \ T^n_{ij}=0, \ \ \ T^j_{in}= a_i\delta_{ij}, \ \ \ \ \forall \ 1\leq i,j\leq n, $$
where $\varphi$ is the coframe dual to $e$ and $\lambda$, $a_1, \ldots , a_n$ are global constants on $M^n$ satisfying $\lambda >0$, $a_n=0$, $a_1+\cdots + a_{n-1}=\lambda$. 
\end{lemma}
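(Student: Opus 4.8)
The plan is to work with the Bismut connection $\nabla^b$ throughout and to exploit the basic principle that a $\nabla^b$-parallel tensor forces all of its algebraic invariants to be constant and all naturally derived tensors to be $\nabla^b$-parallel as well. Since $\nabla^b g=0$, $\nabla^b J=0$ and, by the BTP hypothesis, $\nabla^b T^b=0$ (equivalently $\nabla^b H=0$ for the Bismut torsion $3$-form $H$), every tensor built from $g$, $J$, $H$ by metric contraction is again $\nabla^b$-parallel. In particular I would first observe that the Gauduchon torsion $1$-form $\eta$ is $\nabla^b$-parallel: up to a universal constant $\eta$ is the metric trace of $H$ (the $(1,0)$-part of the Lee form), since $\partial(\omega^{n-1})=-\eta\wedge\omega^{n-1}$ records a trace of the $(2,1)$-part of $d\omega$, which is a piece of $H$. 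Because $\nabla^b$ is metric, $|\eta|^2$ is then constant, so $\lambda:=|\eta|$ is a global constant, and the non-balanced hypothesis $\eta\neq 0$ gives $\lambda>0$.

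Next comes the frame normalization. As $\eta$ is a nowhere-zero $\nabla^b$-parallel $(1,0)$-form of constant length $\lambda$, I would set $e_n$ to be the unit $(1,0)$-vector dual to $\eta/\lambda$ and complete it to a local unitary frame $\{e_1,\dots,e_n\}$ near $p$ with dual coframe $\varphi$. By construction $\eta=\lambda\varphi_n$, equivalently $\eta_i=\sum_k T^k_{ki}=\lambda\delta_{in}$. Moreover $e_n=\eta^\sharp/\lambda$ is itself $\nabla^b$-parallel, so $\nabla^b e_n=0$, whence the Bismut connection forms carrying a single index $n$ all vanish. This decouples the torsion components involving $n$ from the rest and is the structural input that makes the rest of the computation tractable.

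The core of the argument is to convert $\nabla^b H=0$ into explicit equations on the Chern torsion $T^i_{jk}$. Using the standard relation between the Bismut and Chern connections (they differ by an explicit torsion term), one rewrites $\nabla^b H=0$ as two tensorial identities expressing the Chern-covariant derivatives $T^i_{jk,\ell}$ and $T^i_{jk,\bar\ell}$ as quadratic expressions in the torsion. Specializing these to the adapted frame and using that $e_n$ is parallel, I would extract two facts: first, $T^n_{ij}=0$ for all $i,j$, i.e. the torsion has no output in the parallel direction; and second, that the operator $L$ on the span of $\{e_1,\dots,e_{n-1}\}$ given by $L(e_i)=\sum_j T^j_{in}e_j$ is normal, hence unitarily diagonalizable. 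Diagonalizing $L$ by a unitary change of $\{e_1,\dots,e_{n-1}\}$ (which leaves $e_n$ and $\eta=\lambda\varphi_n$ untouched) yields $T^j_{in}=a_i\delta_{ij}$, the eigenvalues $a_i$ being constant by the same parallelism (eigenvalues of a parallel normal operator are constant). The relations $a_n=0$ and $a_1+\cdots+a_{n-1}=\lambda$ follow respectively from $T^n_{ij}=0$ and from $\sum_k T^k_{kn}=\eta_n=\lambda$. Finally, differentiating $\eta=\lambda\varphi_n$ through the structure equation $d\varphi_n=-\sum_j\theta_{jn}\wedge\varphi_j+\tau_n$, with $\lambda$ constant, $\tau_n=0$ (from $T^n_{ij}=0$), and the torsion now diagonal, produces $\partial\eta=0$ and $\overline{\partial}\eta=-\lambda\sum_k\overline{a}_k\,\varphi_k\wedge\overline{\varphi}_k$.

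The hard part will be the pair of claims $T^n_{ij}=0$ and the normality of $L$: passing from the intrinsic condition $\nabla^b H=0$ to the component identities on $T^i_{jk}$, and then proving that the torsion operator in the parallel direction commutes with its adjoint. Normality (rather than, say, Hermiticity, which the complex eigenvalues $a_i$ in the formula for $\overline{\partial}\eta$ rule out) is exactly what licenses the unitary diagonalization, and it is a genuine consequence of the parallel-torsion equations combined with the total skew-symmetry of $H$ and the first Bianchi identity for $\nabla^b$. Getting the index bookkeeping right in this step is the delicate point, whereas the frame choice, the constancy statements, and the concluding differentiation are routine once normality is established.
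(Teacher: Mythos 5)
You should first note a point of comparison: this paper does not actually prove Lemma \ref{lemma2} --- it imports it from \cite{ZhaoZ24} (with the underlying machinery developed in \cite{ZhaoZ22}), so your attempt can only be measured against that external argument. Your scaffolding is correct and matches its architecture: since $\nabla^b g=0$, $\nabla^b J=0$ and $\nabla^b T^b=0$, the trace $\eta$ of the torsion is $\nabla^b$-parallel, hence $\lambda=|\eta|$ is a global constant, positive exactly because $g$ is non-balanced; the frame is then adapted so that $\varphi_n=\eta/\lambda$ with $e_n$ parallel; the relations $a_n=0$ and $a_1+\cdots+a_{n-1}=\lambda$ do follow from $T^n_{ij}=0$ and $\eta_n=\sum_k T^k_{kn}$; and the closing computation of $\partial\eta$ and $\overline{\partial}\eta$ from the structure equation, using $\theta=\theta^b-\gamma$ with $\theta^b_{\ast n}=0$, is routine once the torsion normalization is in place.

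The genuine gap is exactly where you flag it, and flagging it does not fill it: the claims $T^n_{ij}=0$ and the normality of the operator $L(e_i)=\sum_j T^j_{in}e_j$ are the entire nontrivial content of the lemma, and your proposal asserts rather than derives them. Everything else in your write-up is formal consequence-chasing; these two statements require converting $\nabla^b T^b=0$, via $\nabla^b=\nabla+\gamma$ with $\gamma$ given in a unitary frame by $\gamma^{\,j}_{\ i}=\sum_k\big(T^j_{ik}\varphi_k-\overline{T^i_{jk}}\,\overline{\varphi}_k\big)$, into the componentwise identities expressing $T^j_{ik,\ell}$ and $T^j_{ik,\bar\ell}$ as quadratics in the torsion, and then extracting the vanishing of $T^n_{ij}$ and an identity of the shape $\sum_r\big(T^j_{rn}\overline{T^i_{rn}}-T^r_{in}\overline{T^r_{jn}}\big)=0$ (i.e. $LL^{\ast}=L^{\ast}L$) by specializing indices to the parallel direction and invoking the first Bianchi identity --- this index work is precisely where Zhao--Zheng's proof lives, and nothing in your text shows the quadratic terms actually cancel to give these conclusions rather than, say, a weaker relation. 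A secondary point you should make explicit: unitary diagonalizability of $L$ at each point is not enough to produce a smooth local frame with $T^j_{in}=a_i\delta_{ij}$ near an eigenvalue-multiplicity jump; you need that $L$ is $\nabla^b$-parallel (it is, being a contraction of parallel tensors), so its eigenvalues are constant and its eigenbundles are smooth parallel distributions, within which one completes the frame. As it stands your proposal is a correct roadmap that reproduces the known strategy, but not a proof of the lemma.
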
 

A local unitary frame $e$ satisfying the conditions in Lemma \ref{lemma2} is called an {\em admissible frame} for the non-balanced BTP manifold  $(M^n,g)$. By rearranging the order of $e_i$, we may assume that $a_1\cdots a_r \neq 0$ and $a_{r+1}=\cdots =a_n=0$, where $1\leq r\leq n-1$ is an integer depends only on $g$ (the rank of the $\phi$-tensor \cite{ZhaoZ24}). We have
\begin{equation}  \label{eq:2}
(-\sqrt{-1} d\varphi_n)^r = r! \,\overline{a}_1 \cdots \overline{a}_r \,\Phi, \ \ \ \ \ \Phi = (\sqrt{-1}\varphi_1\overline{\varphi}_1)\wedge \cdots \wedge  (\sqrt{-1}\varphi_r\overline{\varphi}_r).
\end{equation}
Now we are ready to prove Proposition \ref{prop1}.

\begin{proof} [{\bf Proof of Proposition \ref{prop1}.}] Let $(M^n,g)$ be a compact, non-balanced BTP manifold. By Lemma \ref{lemma2} we know that locally there always exist admissible frames. Note that although $e$ and $\varphi$ are local (co)frames, $\varphi_n=\frac{1}{\lambda }\eta = \frac{1}{|\eta|} \eta $ is global, and  $\Phi $ is also global by formula (\ref{eq:2}). If $M^n$ admits a Hermitian-symplectic metric $g_0$, then there exists a $(2,0)$-form $\alpha$ so that $\Omega = \alpha + \omega_0 +\overline{\alpha}$ is closed, where $\omega_0$ is the K\"ahler form of $g_0$. Taking the $(n-r,n-r)$-part of $\Omega^{n-r}$, we have as in the proof of Proposition \ref{prop0} that
$$ (\Omega^{n-r})^{n-r,n-r} \geq \omega_0^{n-r}. $$
Therefore 
$$ \int_M \Phi \wedge \Omega^{n-r} = \int_M \Phi \wedge ( \Omega^{n-r})^{n-r,n-r} \geq \int_M \Phi \wedge \omega_0^{n-r} > 0. $$
On the other hand, since $\Omega$ is closed, the integral of $(-\sqrt{-1} d\varphi_n)^r \wedge \Omega^{n-r}$ is zero, so (\ref{eq:2}) would lead to a contradiction. This shows that $M^n$ cannot admit any Hermitian-symplectic metric, and Streets-Tian Conjecture holds for non-balanced BTP manifolds. This completes the proof of Proposition \ref{prop1}.
\end{proof}

\vspace{0.3cm}

\section{Hermitian-symplectic metrics on Lie algebras}

In this section, let us give the description of Hermitian-symplectic metrics on a general Lie algebra. Let us start with  {\em Lie-complex manifolds,} which means a compact complex manifold $M^n$ such that $M=G/\Gamma$ where $G$ is a Lie group, $\Gamma \subset G$ is a discrete subgroup, and the complex structure $J$ when lifted onto $G$ is left-invariant. In the past decades, the Hermitian geometry of Lie-complex manifolds were studied from various angles by A. Gray, S. Salamon, L. Ugarte, A. Fino, L. Vezzoni, F. Podest\`a, D. Angella, A. Andrada, and others. There is a vast amount of literature on this topic, here we will just mention a small sample: \cite{AU}, \cite{CFGU}, \cite{EFV}, \cite{FP3}, \cite{FinoTomassini}, \cite{FinoVezzoni}, \cite{GiustiPodesta}, \cite{Salamon}, \cite{Ugarte}, \cite{WYZ}, and interested readers could start their exploration there. For more general discussions on non-K\"ahler Hermitian geometry in general, see for example \cite{AI}, \cite{AT}, \cite{Fu}, \cite{STW}, \cite{Tosatti} and the references therein.

Here for our purpose of exploring the Streets-Tian Conjecture, first let us recall the averaging trick of Fino-Grantcharov and Ugarte \cite{FG, Ugarte}, which allows one to reduce the consideration to invariant metrics. Since the Lie group $G$ has compact quotient, it admits a bi-invariant measure \cite{Milnor}. If $g$ is a Hermitian metric with K\"ahler form $\omega$, then by averaging $\omega$ over $M$, one gets an invariant Hermitian metric $\tilde{g}$ with K\"ahler form $\tilde{\omega}$. Clearly, if $g$ is pluriclosed, then so would be $\tilde{g}$. Similarly, if $g$ is a Hermitian-symplectic metric on $M$, then there would be $(2,0)$-form $\alpha$ on $M$ so that $\Omega = \alpha + \omega + \overline{\alpha}$ is a closed. By averaging $\Omega$, we get an invariant closed $2$-form $\tilde{\Omega}$. Clearly its $(1,1)$-part is necessarily positive, thus giving an invariant Hermitian-symplectic metric. If $g$ is balanced, on the other hand, then $\omega^{n-1}$ is closed. By averaging it over, we get an invariant closed $(n-1,n-1)$-form $\Psi$, which is clearly strictly positive. Since any strictly positive $(n-1,n-1)$-form on a complex $n$-manifold would equal to $\omega_0^{n-1}$ for a unique positive $(1,1)$-form $\omega_0$, we get an invariant balanced metric as well. We refer the readers to \cite[Lemma 3.2]{EFV}) and \cite{FG, Ugarte} for more details. The point is:

\vspace{0.2cm}

 {\em In order to verify Streets-Tian Conjecture or Fino-Vezzoni Conjecture for Lie-complex manifolds, it suffices to consider invariant metrics.} 

\vspace{0.2cm}

Denote by ${\mathfrak g}$ the Lie algebra of $G$. Left invariant metrics on $G$ correspond to inner products on ${\mathfrak g}$, and left-invariant complex structures on $G$ correspond to {\em complex structure} on ${\mathfrak g}$, namely, linear (vector space) isomorphisms $J : {\mathfrak g} \rightarrow {\mathfrak g}$ satisfying $J^2=-I$ and the integrability condition 
\begin{equation} \label{integrability}
[x,y] - [Jx,Jy] + J[Jx,y] + J[x,Jy] =0, \ \ \ \ \ \ \forall \ x,y \in {\mathfrak g}. 
\end{equation}
The compatibility between a metric  $g=\langle , \rangle $ and a complex structure $J$ means as usual that $\langle Jx,Jy\rangle = \langle x,y\rangle$ for any $x,y\in {\mathfrak g }$, in other words $J$ is an orthogonal transformation.

Let ${\mathfrak g}^{\mathbb C}$ be the complexification of ${\mathfrak g}$, and write ${\mathfrak g}^{1,0}= \{ x-\sqrt{-1}Jx \mid x \in {\mathfrak g}\} \subseteq {\mathfrak g}^{\mathbb C}$. The integrability condition (\ref{integrability}) means that ${\mathfrak g}^{1,0}$ is a complex Lie subalgebra of ${\mathfrak g}^{\mathbb C}$. Extend $g=\langle , \rangle $ bi-linearly over ${\mathbb C}$, and let $e=\{ e_1, \ldots , e_n\}$ be a unitary basis of ${\mathfrak g}^{1,0}$. Following \cite{VYZ}, we will use
\begin{equation*} \label{CandD}
C^j_{ik} = \langle [e_i,e_k],\overline{e}_j \rangle, \ \ \ \ \ \  D^j_{ik} = \langle [\overline{e}_j, e_k] , e_i \rangle
\end{equation*}
to denote the structure constants, or equivalently, under the unitary frame $e$ we have
\begin{equation} \label{CandD2}
[e_i,e_j] = \sum_k C^k_{ij}e_k, \ \ \ \ \ [e_i, \overline{e}_j] = \sum_k \big( \overline{D^i_{kj}} e_k - D^j_{ki} \overline{e}_k \big) .
\end{equation}
Note that ${\mathfrak g}$ is unimodular if and only if $\mbox{tr}(ad_x)=0$ for any $x\in {\mathfrak g}$, which is equivalent to
\begin{equation} \label{unimo}
{\mathfrak g} \ \, \mbox{is unimodular}  \ \ \Longleftrightarrow  \ \ \sum_r \big( C^r_{ri} + D^r_{ri}\big) =0 , \, \ \forall \ i.
\end{equation}
Next, let us denote by $\nabla$ the Chern connection, and by $T$, $R$ its torsion and curvature tensor. Then the Chern torsion components under $e$ are given by
\begin{equation} \label{torsion}
T( e_i, \overline{e}_j)=0, \ \ \ \ T(e_i,e_j)  = \sum_k T^k_{ij}e_k, \ \ \ \  \ \ T^j_{ik}= - C^j_{ik} -  D^j_{ik} + D^j_{ki}.
\end{equation}
The structure equation takes the form:
\begin{equation} \label{eq:structure}
d\varphi_i = -\frac{1}{2} \sum_{j,k} C^i_{jk} \,\varphi_j\wedge \varphi_k - \sum_{j,k} \overline{D^j_{ik}} \,\varphi_j \wedge \overline{\varphi}_k, \ \ \ \ \ \ \forall \  1\leq i\leq n.
\end{equation}
Here $\{ \varphi_1, \ldots , \varphi_n\}$ denotes the coframe dual to $e$. Differentiate the above, we get the  first Bianchi identity, which is equivalent to the Jacobi identity in this case:
\begin{equation}
\left\{  \begin{split}  \sum_r \big( C^r_{ij}C^{\ell}_{rk} + C^r_{jk}C^{\ell}_{ri} + C^r_{ki}C^{\ell}_{rj} \big) \ = \ 0,  \hspace{3.2cm}\\
 \sum_r \big( C^r_{ik}D^{\ell}_{jr} + D^r_{ji}D^{\ell}_{rk} - D^r_{jk}D^{\ell}_{ri} \big) \ = \ 0, \hspace{3cm} \\
 \sum_r \big( C^r_{ik}\overline{D^r_{j \ell}}  - C^j_{rk}\overline{D^i_{r \ell}} + C^j_{ri}\overline{D^k_{r \ell}} -  D^{\ell}_{ri}\overline{D^k_{j r}} +  D^{\ell}_{rk}\overline{D^i_{jr}}  \big) \ = \ 0,  \end{split} \right.  \label{CC}
\end{equation}
for any $1\leq i,j,k,\ell\leq n$. To verify Streets-Tian Conjecture for  Lie-complex manifolds, we need the following characterization for Hermitian-symplectic metrics on such manifolds:
\begin{lemma} \label{lemma3}
Let $({\mathfrak g}, J, g)$ be a Lie algebra equipped with a Hermitian structure. The metric $g$ is Hermitian-symplectic if and only if for any given unitary frame $e$, there exists a skew-symmetric matrix $S$ such that
\begin{equation} \label{eq:HS}
\left\{ \begin{split} 
 \sum_r \big( S_{ri}C^r_{jk} + S_{rj} C^r_{ki}  + S_{rk} C^r_{ij} \big) =0,  \\
 \sum_r \big( S_{rk}\overline{D^i_{rj}} - S_{ri} \overline{D^k_{rj}} \big) = - \sqrt{-1}T^j_{ik}, \,     \end{split} \right. \ \ \ \ \ \ \ \ \ \ \ \ \ \forall\ 1\leq i,j,k\leq n. 
 \end{equation}
\end{lemma}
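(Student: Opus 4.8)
The plan is to translate the definition of Hermitian-symplectic into the invariant (Chevalley–Eilenberg) setting and then reduce the closedness of the taming form to its individual bidegree components. First I would invoke the averaging trick of Fino–Grantcharov recalled above: if the invariant metric $g$ is Hermitian-symplectic, the taming $2$-form $\Omega$ may be taken left-invariant, so its $(2,0)$-part $\alpha$ has constant coefficients. Fixing a unitary coframe $\varphi$ and writing $\alpha = \frac12\sum_{i,k} S_{ik}\,\varphi_i\wedge\varphi_k$ with $S=(S_{ik})$ a (complex) skew-symmetric matrix, the statement becomes: $g$ is Hermitian-symplectic if and only if there exists such an $S$ with $d\Omega=0$, where $\Omega=\alpha+\omega+\overline{\alpha}$ and $\omega=\sqrt{-1}\sum_i\varphi_i\wedge\overline{\varphi}_i$. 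Thus $S$ is nothing but the coordinate matrix of the invariant $(2,0)$-form $\alpha$ in the chosen frame.

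Next I would decompose $d\Omega$ by bidegree. Since $\alpha$ is of type $(2,0)$ and $\omega$ of type $(1,1)$, the form $d\Omega$ has pieces of types $(3,0),(2,1),(1,2),(0,3)$, and because $\Omega$ is real the last two are the conjugates of the first two. Hence $d\Omega=0$ is equivalent to the two complex identities $\partial\alpha=0$ (the $(3,0)$-part) and $\overline{\partial}\alpha+\partial\omega=0$ (the $(2,1)$-part); the $(1,2)$ and $(0,3)$ equations impose nothing further, a point I would verify explicitly so that the reduction is lossless.

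The computational core is to expand these two identities via the structure equation \eqref{eq:structure} and the torsion formula \eqref{torsion}. For the $(3,0)$-part, using $\partial\varphi_i=-\frac12\sum_{j,k}C^i_{jk}\varphi_j\wedge\varphi_k$ and the skew-symmetry of $S$, one finds $\partial\alpha$ to be a contraction $\sum_r S_{r\bullet}C^r_{\bullet\bullet}$ wedged into holomorphic $3$-forms; totally antisymmetrizing the three holomorphic indices, and remembering that $C^r_{jk}$ is already antisymmetric in $(j,k)$, turns $\partial\alpha=0$ into precisely the cyclic identity that is the first line of \eqref{eq:HS}. For the $(2,1)$-part I would compute $\overline{\partial}\alpha$ from $\overline{\partial}\varphi_i=-\sum_{j,k}\overline{D^j_{ik}}\varphi_j\wedge\overline{\varphi}_k$, and compute $\partial\omega$ directly from \eqref{eq:structure}; a short calculation gives $\partial\omega=\frac{\sqrt{-1}}{2}\sum_{i,k,j}T^j_{ik}\,\varphi_i\wedge\varphi_k\wedge\overline{\varphi}_j$, the key point being that the combination $-C^j_{ik}-D^j_{ik}+D^j_{ki}$ assembles into $T^j_{ik}$ by \eqref{torsion}. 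Matching the coefficient of $\varphi_i\wedge\varphi_k\wedge\overline{\varphi}_j$ in $\overline{\partial}\alpha+\partial\omega=0$ then yields exactly the second line of \eqref{eq:HS}.

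The main obstacle is not conceptual but is the bookkeeping: keeping the antisymmetrizations, the reorderings of mixed wedge factors (each transposition of a $\varphi$ past an $\overline{\varphi}$ costing a sign), and the constants $\frac12$ and $\sqrt{-1}$ all consistent, so that the raw coefficients collapse into the clean cyclic expression and into the torsion components of \eqref{torsion}. I would also record that both directions of the ``if and only if'' are obtained at once, since the chain of equivalences (invariant taming form $\leftrightarrow$ constant skew $S$ with $d\Omega=0$ $\leftrightarrow$ the system \eqref{eq:HS}) is reversible, and that passing between different unitary frames simply transforms $S$ by the corresponding unitary change of basis, so the criterion is frame-independent.
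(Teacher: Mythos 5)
Your proposal is correct and takes essentially the same route as the paper's proof: write $\alpha$ in a unitary coframe with skew-symmetric coefficient matrix $S$, reduce $d\Omega=0$ by bidegree to $\partial\alpha=0$ and $\overline{\partial}\alpha=-\partial\omega$ (the $(1,2)$- and $(0,3)$-parts being conjugates), and expand via the structure equation \eqref{eq:structure} and the torsion formula \eqref{torsion} to land on the two identities of \eqref{eq:HS}. The only differences are cosmetic: your normalization $\alpha=\frac12\sum_{i,k}S_{ik}\varphi_i\wedge\varphi_k$ merely rescales $S$ relative to the paper's $\alpha=\sum_{i,k}S_{ik}\varphi_i\wedge\varphi_k$ (irrelevant for the existence statement, since the first identity is homogeneous and the second just rescales), and your appeal to the averaging trick is unnecessary at the Lie-algebra level, where $\alpha$ is an element of $\Lambda^{2,0}{\mathfrak g}^*$ and hence has constant coefficients by definition.
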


\begin{proof}
Let $\varphi$ be the unitary coframe dual to $e$. By definition, $g$ will be Hermitian-symplectic if and only if there exists a $(2,0)$-form $\alpha$ so that $\Omega = \alpha + \omega + \overline{\alpha}$ is closed. This means that $\partial \alpha =0$ and $\overline{\partial}\alpha = - \partial \omega$. Write $\alpha = \sum_{i,k} S_{ik}\varphi_i\wedge \varphi_k$, where $S$ is a skew-symmetric matrix. Then we have
\begin{eqnarray*}
 \partial \alpha & = &  2\sum_{r,k} S_{rk} \partial \varphi_r \wedge \varphi_k \ = \ - \sum_{r,i,j,k} S_{rk} C^r_{ij} \, \varphi_i\wedge \varphi_j \wedge \varphi_k \\
 & = & -\frac{1}{3} \sum_{i,j,k} \{ \sum_r \big(  S_{rk} C^r_{ij} + S_{rj} C^r_{ki} + S_{ri} C^r_{jk} \big)\}  \,\varphi_i\wedge \varphi_j \wedge \varphi_k. 
\end{eqnarray*}
So by $\partial \alpha =0$ we get the first identity in (\ref{eq:HS}). Similarly, we have
\begin{eqnarray*}
 \overline{\partial} \alpha & = &  2\sum_{r,k} S_{rk} \overline{\partial}\varphi_r \wedge \varphi_k \ = \ - 2 \sum_{r,i,j,k} S_{rk} \overline{D^i_{rj}} \, \varphi_i\wedge \overline{\varphi}_j \wedge \varphi_k \\
 & = & - \sum_{i,j,k} \{ \sum_r \big( S_{rk} \overline{D^i_{rj}} - S_{ri} \overline{D^k_{rj}} \big)\}  \, \varphi_i\wedge \overline{\varphi}_j \wedge \varphi_k. 
\end{eqnarray*}
On the other hand,
$$ \partial \omega = \sqrt{-1} \,^t\!\tau \wedge \overline{\varphi} = \sqrt{-1} \sum_{i,j,k} T^j_{ik} \,\varphi_i\wedge \varphi_k \wedge \overline{\varphi}_j , $$
so by comparing the above two lines we get the second identity in (\ref{eq:HS}). This completes the proof of Lemma \ref{lemma3}.
\end{proof}

\vspace{0.3cm}

\section{The almost abelian case revisted}

In order to make our proof of Proposition \ref{prop3} more transparent, let us start with the almost abelian case first, in which case the result is known and due to Fino-Kasuya-Vezzoni \cite[Theorem 1.4]{FKV} and Fino-Paradiso \cite[Proposition 6.1]{FP4}:

\begin{proposition} [\cite{FKV, FP4}] \label{prop2}
Let $M^n=G/\Gamma $ be a compact complex manifold which is the quotient of a Lie group $G$ by a discrete subgroup $\Gamma$, where the complex structure $J$ of $M$ (when lifted onto $G$) is left-invariant. Assume that the Lie algebra ${\mathfrak g}$ of $G$ is almost abelian, namely, it contains an abelian ideal ${\mathfrak a}$ of codimension $1$. If  $M^n$ admits a Hermitian-symplectic metric, then it must admit a (left-invariant) K\"ahler metric.
\end{proposition}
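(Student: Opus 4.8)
The plan is to convert Proposition~\ref{prop2} into a finite-dimensional linear algebra problem through Lemma~\ref{lemma3}: the metric $g$ is Hermitian-symplectic exactly when the system (\ref{eq:HS}) admits a skew-symmetric solution $S$. I would first record the shape of the structure constants in an adapted unitary frame, then attempt to solve (\ref{eq:HS}) explicitly and read off enough vanishing to produce a left-invariant K\"ahler metric.

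First I would fix the frame. Since $\dim_{\mathbb R}{\mathfrak a}=2n-1$ is odd, ${\mathfrak a}$ can never be $J$-invariant, so ${\mathfrak b}:={\mathfrak a}\cap J{\mathfrak a}$ is $J$-invariant of complex dimension $n-1$, and $[{\mathfrak b},{\mathfrak b}]=0$ because ${\mathfrak a}$ is abelian. Choosing a unit vector $e_0\perp{\mathfrak a}$, the identity $\langle Je_0,e_0\rangle=0$ forces $f_0:=Je_0\in{\mathfrak a}$, and ${\mathfrak g}={\mathfrak b}\oplus\langle e_0,f_0\rangle$ is an orthogonal sum of two $J$-invariant pieces. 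Taking a unitary frame $e_1,\dots,e_{n-1}$ of ${\mathfrak b}^{1,0}$ together with $e_n$ spanning $\langle e_0,f_0\rangle^{1,0}$, every bracket is governed by the single operator $A:=\mathrm{ad}_{e_0}|_{\mathfrak a}$; in particular all structure constants $C^k_{ij}$ and $D^j_{ik}$ carrying no index equal to $n$ vanish, and by (\ref{CandD2}) the surviving ones are linear expressions in the entries of $A$ written in block form relative to ${\mathfrak a}={\mathfrak b}\oplus{\mathbb R}f_0$. The Jacobi identity (\ref{CC}) is automatic here, since ${\mathfrak a}$ is abelian and there is only one transverse generator.

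Next I would substitute this sparse data into (\ref{eq:HS}). The first line of (\ref{eq:HS}), coming from $\partial\alpha=0$, only involves the $C^k_{in}$ and forces the entries $S_{in}$ to be determined by the ${\mathfrak b}$--$f_0$ coupling of $A$; the second line, coming from $\overline\partial\alpha=-\partial\omega$, pins the remaining block of $S$ against the Chern torsion $T^j_{ik}$, which by (\ref{torsion}) is itself an explicit expression in $A$. The goal is to show that solvability of this coupled system, together with unimodularity---the lattice $\Gamma$ forces (\ref{unimo}), i.e. $\mathrm{tr}\,A=0$ in the almost abelian case---drives the Chern torsion to a degenerate form: concretely, that the $J$-anti-invariant part of $A|_{\mathfrak b}$ and the off-diagonal coupling between ${\mathfrak b}$ and $f_0$ must vanish. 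Once $A$ commutes with $J$ and these couplings are gone, one can adjust the inner product on the adapted frame so that $\omega$ becomes closed, exhibiting an invariant K\"ahler metric, which is all that Proposition~\ref{prop2} requires; with more care one recovers in this unimodular setting the sharper conclusion of \cite{FKV, FP4} that ${\mathfrak g}$ is abelian and $M^n$ is a complex torus.

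The step I expect to be the main obstacle is the middle one: extracting definiteness from the coupled system (\ref{eq:HS}). \emph{A priori} these equations only relate the torsion and the non-K\"ahler part of $A$ to the unknown entries of $S$ rather than killing them, so a single integral positivity estimate of the type used in Propositions~\ref{prop0} and~\ref{prop1} is not directly available. The key technical device I would look for is a contraction of the second line of (\ref{eq:HS}) against $\overline{T^j_{ik}}$ (or against $\overline S$), in which the skew-symmetry of $S$ and the first line of (\ref{eq:HS}) conspire to cancel the cross terms and leave a manifestly nonnegative quadratic expression in the entries of $A$ that must vanish. This algebraic sum-of-squares identity would play the role that the integral positivity argument plays in the compact setting, and verifying that the cross terms indeed cancel---which is where the almost abelian hypothesis is genuinely used---is the crux of the proof.
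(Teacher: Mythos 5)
Your skeleton coincides with the paper's: averaging to invariant data, the characterization of Lemma \ref{lemma3}, an adapted frame built from ${\mathfrak b}={\mathfrak a}\cap J{\mathfrak a}$ (this is exactly the admissible frame of (\ref{CandD-aala}), with data $\lambda\in{\mathbb R}$, $v\in{\mathbb C}^{n-1}$ and a complex matrix $A$), and a final adjustment of the metric. But the step you single out as the crux is mis-aimed in two ways. First, the $J$-anti-invariant part of $\mathrm{ad}_{e_0}|_{\mathfrak b}$ vanishes automatically, before any Hermitian-symplectic input: since $\sqrt{-1}(e_1-\overline{e}_1)$ and ${\mathfrak b}$ both lie in the abelian ideal ${\mathfrak a}$, one has $[e_1-\overline{e}_1,e_i]=0$, which is precisely what the relation $C^j_{1i}=-\overline{A_{ji}}$ in (\ref{CandD-aala}) records. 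What the Hermitian-symplectic condition actually kills is the \emph{self-adjoint} part of $A$: plugging (\ref{CandD-aala}) into the second identity of (\ref{eq:HS}) with $i=1$ and $j,k\geq 2$ makes the left-hand side vanish identically (every nonzero $D$-component has last index $1$), which forces $T^j_{1k}=\overline{A_{jk}}+A_{kj}=0$, i.e. $A+A^{\ast}=0$. This is a purely \emph{linear} consequence of the sparsity of the structure constants; no contraction against $\overline{T}$ and no sum-of-squares identity is needed, and unimodularity (Lemma \ref{lemma5}(i)) then gives $\lambda=0$ because $\mathrm{tr}(A)$ is imaginary. (A smaller misreading: the first line of (\ref{eq:HS}) does not determine the entries $S_{in}$; it yields the constraint $S'\overline{A}+A^{\ast}S'=0$ of (\ref{eq:13}) on the ${\mathfrak b}$-block $S'$, while $u$ enters only through (\ref{eq:15}).)

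Second, and this is the genuine gap: your expected conclusion that the ${\mathfrak b}$--$f_0$ coupling $v$ must vanish is false. With $\lambda=0$ and $A$ skew-Hermitian, the choice $S'=0$, $u$ arbitrary, $v=\sqrt{-1}Au\neq 0$ solves the entire system (\ref{eq:13})--(\ref{eq:15}), so these algebras carry non-K\"ahler Hermitian-symplectic metrics with $v\neq 0$ (K\"ahler requires $v=0$ by Lemma \ref{lemma5}(ii)). The true content of the remaining equation (\ref{eq:15}) is only the image condition $v\in\mathrm{Im}(A)$, and extracting it is where the real work lies: one conjugates (\ref{eq:a}), multiplies by $\sqrt{-1}S'$, and uses the commutation $S'\overline{A}=AS'$ coming from (\ref{eq:13})--(\ref{eq:14}), together with the positivity of $I-S'\overline{S'}$, which commutes with $A$, to conclude $v=-A\overline{a}$ for some vector $a$. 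Only then does your final step function: the admissible change $\tilde{e}_1=e_1+\sum_k a_ke_k$ of Lemma \ref{lemma5b} preserves $\lambda=0$ and $A$ but replaces $v$ by $v+A\overline{a}=0$, producing the K\"ahler metric. Without $v\in\mathrm{Im}(A)$ the deformation has nothing to aim at, and note from (\ref{Achange-aala}) that admissible changes only conjugate and scale $A$, so they could never create $A+A^{\ast}=0$ had the Hermitian-symplectic system not already supplied it. Finally, your parenthetical that ``with more care'' one recovers ${\mathfrak g}$ abelian overclaims relative to this argument: it terminates in a K\"ahler metric on a possibly non-abelian unimodular algebra (a nonzero skew-Hermitian $A$ survives), which is exactly why the paper states the weaker conclusion that $M^n$ admits a left-invariant K\"ahler metric.
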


Let ${\mathfrak g}$ be a Lie algebra containing an abelian ideal ${\mathfrak a}$  of codimension $1$, and $(J,g)$ be a Hermitian structure on ${\mathfrak g}$. We will follow the notations of \cite{GuoZ}. Then ${\mathfrak a}_J:={\mathfrak a}\cap J {\mathfrak a}$ in ${\mathfrak g}$  is a $J$-invariant ideal in ${\mathfrak g}$ of codimension $2$. A unitary basis  $e$ of ${\mathfrak g}^{1,0}$ is called an {\em admissible frame} for $({\mathfrak g}, J, g)$  if ${\mathfrak a}_J$ is spanned 
by the real and imaginary parts of $e_i$ for $2\leq i\leq n$, while 
$$ {\mathfrak a} = \mbox{span}_{\mathbb R}\{ \sqrt{-1}(e_1-\overline{e}_1); \ (e_i+\overline{e}_i), \, \sqrt{-1}(e_i-\overline{e}_i); \ 2\leq i\leq n \} .$$
Let $e$ to be an admissible frame from now on. Following \cite{GuoZ}, since ${\mathfrak a}$ is abelian, we have
$$ [e_i, e_j]= [ e_i, \overline{e}_j] = [e_1-\overline{e}_1, e_i]= 0, \ \ \ \ \forall \ 2\leq i,j\leq n.$$
From this and (\ref{CandD2}) we deduce that
\begin{equation*}
C^{\ast}_{ij} = D^j_{\ast i} = D^1_{\ast i} = C^{\ast}_{1i}+\overline{D^i_{\ast 1}} =0, \ \ \ \ \forall \ 2\leq i,j\leq n.
\end{equation*}
Also, since ${\mathfrak a}$ is an ideal, $[e_1+\overline{e}_1, {\mathfrak a} ] \subseteq {\mathfrak a}$, which leads to $C^1_{1i}=0$ and $D^1_{11} = \overline{D^1_{11}}$. Putting these together, we know that the only possibly non-trivial components of $C$ and $D$ are
\begin{equation} \label{CandD-aala}
D^1_{11}=\lambda \in {\mathbb R}, \ \ \ D^1_{i1}=v_i \in {\mathbb C}, \ \ \ D^j_{i1}=A_{ij}, \ \ \ C^{j}_{1i} = - \overline{A_{ji}}, \ \ \ \ \ \  2\leq i,j\leq n.
\end{equation}
That is, for an almost abelian Hermitian Lie algebra, under any admissible frame, the only possibly non-trivial structure constants are $\lambda \in {\mathbb R}$, a complex column vector $v\in {\mathbb C}^{n-1}$, and a complex $(n-1)\times (n-1)$ matrix $A=(A_{ij})_{2\leq i,j\leq n}$. In terms of the dual coframe $\varphi$, the structure equation (\ref{eq:structure}) now takes the following form:

\begin{lemma} [\cite{GuoZ}] \label{lemma4}
Let ${\mathfrak g}$ be an almost abelian Lie algebra with a Hermitian structure $(J,g)$. Then there exists a unitary coframe $\varphi$ under which the structure equation is
\begin{equation*}
\left\{ \begin{split}  d\varphi_1 = - \lambda \,\varphi_1\wedge \overline{\varphi}_1 , \hspace{5.8cm} \\ d\varphi_i = - \overline{v}_i \, \varphi_1\wedge \overline{\varphi}_1 +  \sum_{j=2}^n \overline{A_{ij}} \,(\varphi_1 + \overline{\varphi}_1)\wedge \varphi_j , \ \ \ 2\leq i\leq n. \end{split} \right.
\end{equation*}
\end{lemma}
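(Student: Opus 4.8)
The plan is to obtain both structure equations by directly substituting the structure constants recorded in (\ref{CandD-aala}) into the general Chern structure equation (\ref{eq:structure}) and simplifying. Since the display preceding (\ref{CandD-aala}) has already shown that every structure constant other than $\lambda$, the $v_i$, and the $A_{ij}$ (together with $C^j_{1i}=-\overline{A_{ji}}$) vanishes, the entire argument is one of careful index bookkeeping: for each $i$ I would determine which surviving constants contribute to the holomorphic piece $-\tfrac12\sum_{j,k}C^i_{jk}\,\varphi_j\wedge\varphi_k$ and to the mixed piece $-\sum_{j,k}\overline{D^j_{ik}}\,\varphi_j\wedge\overline\varphi_k$.

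For the case $i=1$, I would first note that none of the surviving $C$'s has superscript $1$ — they are all of the form $C^j_{1i}$ with $j\ge 2$ — so the holomorphic term disappears. In the mixed term, inspecting (\ref{CandD-aala}) shows that $D^1_{11}=\lambda$ is the only surviving constant with superscript and first lower index both equal to $1$; as $\lambda\in\mathbb{R}$ the conjugation is immaterial, and one arrives at $d\varphi_1=-\lambda\,\varphi_1\wedge\overline\varphi_1$.

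The substantive case is $2\le i\le n$. For the holomorphic term I would collect the pairs of lower indices $(1,k)$ and $(k,1)$ with $k\ge 2$, contributing $C^i_{1k}=-\overline{A_{ik}}$ and $C^i_{k1}=\overline{A_{ik}}$ respectively; after reordering $\varphi_k\wedge\varphi_1=-\varphi_1\wedge\varphi_k$ the two add up, and the overall factor $-\tfrac12$ leaves precisely $\sum_{j\ge 2}\overline{A_{ij}}\,\varphi_1\wedge\varphi_j$. For the mixed term, every surviving $D$ with first lower index $i$ has its second lower index equal to $1$, so only $\overline\varphi_1$ occurs; separating the superscript into $j=1$ (which gives $\overline{v}_i\,\varphi_1\wedge\overline\varphi_1$) and $j\ge 2$ (which gives $\sum_{j\ge 2}\overline{A_{ij}}\,\varphi_j\wedge\overline\varphi_1$), and then using $\varphi_j\wedge\overline\varphi_1=-\overline\varphi_1\wedge\varphi_j$, I would combine the two $A$-contributions into the common factor $(\varphi_1+\overline\varphi_1)\wedge\varphi_j$, yielding the claimed formula.

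The only step demanding real attention — and hence the main (if modest) obstacle — is the sign accounting in the holomorphic term: one must sum over both orderings of the lower index pair and invoke the antisymmetry $C^i_{k1}=-C^i_{1k}$, failing which a spurious factor of $\tfrac12$ would remain. Beyond this, the computation is a pure transcription of (\ref{CandD-aala}) into (\ref{eq:structure}); neither the Jacobi relations (\ref{CC}) nor the torsion identity (\ref{torsion}) are required.
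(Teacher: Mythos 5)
Your proposal is correct and takes essentially the same route as the paper: Lemma \ref{lemma4} is presented there as the immediate result of substituting the admissible-frame structure constants (\ref{CandD-aala}) into the general structure equation (\ref{eq:structure}) (with the details deferred to \cite{GuoZ}), and your index and sign bookkeeping---including summing over both orderings of the lower indices of $C$ to absorb the factor $\tfrac12$---carries out that substitution correctly. The only cosmetic quibble is that in your mixed-term parentheticals you suppress the overall minus sign in front of $\sum_{j,k}\overline{D^j_{ik}}\,\varphi_j\wedge\overline{\varphi}_k$, but you apply it consistently, so the final formula comes out as claimed.
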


Also, from part (ii) and (iii) of \cite[Proposition 7]{GuoZ}, we know that 
\begin{lemma} [\cite{GuoZ}] \label{lemma5}
Let ${\mathfrak g}$ be an almost abelian Lie algebra with a Hermitian structure $(J,g)$. Then under any admissible frame $e$, it holds that
\begin{enumerate}
\item   ${\mathfrak g}$ is unimodular $\ \Longleftrightarrow \ $ $\lambda + \mbox{tr}(A) + \overline{\mbox{tr}(A)} = 0$;
\item $g$ is K\"ahler  $\ \Longleftrightarrow \ $ $v=0$ and $A+A^{\ast}=0$.
 \end{enumerate}
\end{lemma}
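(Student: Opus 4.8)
The plan is to reduce both equivalences to direct computations with the structure constants recorded in (\ref{CandD-aala}), invoking the unimodularity criterion (\ref{unimo}) for part (i) and the Chern torsion formula (\ref{torsion}) for part (ii). Since for an admissible frame the only surviving data are $\lambda$, $v$ and $A$, and since $g$ is K\"ahler exactly when its Chern torsion vanishes identically, no input beyond these identities is needed; everything is frame-by-frame algebra.

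For part (i), I would feed the nonzero components of (\ref{CandD-aala}) into the left-hand side $\sum_r (C^r_{ri} + D^r_{ri})$ of (\ref{unimo}) for each fixed $i$. For $i \geq 2$ every term drops out, since the surviving $C$-components all carry a lower index equal to $1$ while the surviving $D$-components all carry last lower index equal to $1$; hence the unimodularity equation is automatic in these cases. The single nontrivial equation occurs at $i = 1$, where the computation yields $\sum_r C^r_{r1} = \overline{\mbox{tr}(A)}$ and $\sum_r D^r_{r1} = \lambda + \mbox{tr}(A)$. Their sum is exactly $\lambda + \mbox{tr}(A) + \overline{\mbox{tr}(A)}$, so ${\mathfrak g}$ is unimodular if and only if this quantity vanishes.

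For part (ii), I would compute all torsion components $T^j_{ik} = -C^j_{ik} - D^j_{ik} + D^j_{ki}$ from (\ref{torsion}), using (\ref{CandD-aala}) together with the antisymmetry $T^j_{ik} = -T^j_{ki}$. Whenever both lower indices are $\geq 2$ the right-hand side vanishes term by term, so the only candidates for nonzero torsion are those with one lower index equal to $1$; one finds $T^1_{1m} = v_m$ and $T^j_{1m} = A_{mj} + \overline{A_{jm}}$ for $m, j \geq 2$. Demanding $T \equiv 0$ then forces $v = 0$ and $A_{mj} + \overline{A_{jm}} = 0$ for all $m, j$, which is precisely $A + A^{\ast} = 0$; conversely these two conditions annihilate every torsion component, giving the stated equivalence.

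I do not expect any genuine obstacle here: the argument is pure bookkeeping once (\ref{CandD-aala}) is in hand. The only point demanding care is keeping track of which structure constants are nonzero and respecting the index-placement and antisymmetry conventions in (\ref{torsion}) and (\ref{CandD-aala}). As an alternative to the torsion route in part (ii), one could differentiate $\omega = \sqrt{-1}\sum_i \varphi_i \wedge \overline{\varphi}_i$ directly via the structure equations of Lemma \ref{lemma4} and read off when $d\omega = 0$, but working through the Chern torsion is cleaner and reuses identities already established.
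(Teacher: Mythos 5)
Your proposal is correct: the paper itself gives no proof of this lemma, simply citing parts (ii) and (iii) of \cite[Proposition 7]{GuoZ}, and your computation is exactly the standard verification that source performs, plugging the data (\ref{CandD-aala}) into the unimodularity criterion (\ref{unimo}) and the torsion formula (\ref{torsion}). All the index bookkeeping checks out, including the key identities $\sum_r C^r_{r1}=\overline{\mbox{tr}(A)}$, $\sum_r D^r_{r1}=\lambda+\mbox{tr}(A)$, and $T^j_{1m}=(A+A^{\ast})_{mj}$, $T^1_{1m}=v_m$ with all other components vanishing.
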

Here $A^{\ast}$ stands for the conjugate transpose of $A$.  Note that if a Lie group admits a compact quotient, then its Lie algebra is necessarily unimodular. We need one more piece of information before we will proceed with the proof of Proposition \ref{prop2}. Suppose $g$, $\tilde{g}$ are two Hermitian metrics on almost abelian $({\mathfrak g},J)$. Let $e$ be an admissible frame for $g$. By a unitary change of $\{ e_2, \ldots , e_n\}$ if necessary, we may assume that
\begin{equation} \label{echange-aala}
\tilde{e}_1=pe_1 + \sum_{k=2}^n a_k e_k, \ \ \ \tilde{e}_i=p_ie_i,  \ \ \  2\leq i\leq n,   
\end{equation}
is an admissible frame for $\tilde{g}$, where $p$, $p_i$, $a_i$ are constants with each $p_i>0$. By rotating $\tilde{e}_1$ if needed, we may also assume that $p>0$. Their dual coframes are related by
$$ \tilde{\varphi}_1 = \frac{1}{p} \varphi_1, \ \ \ \ \tilde{\varphi}_i = \frac{1}{p_i} \varphi_i - \frac{a_i}{pp_i}\varphi_1 . $$
Plug them into the structure equation in Lemma \ref{lemma4}, we get the formula relating the structure constants of the two metrics:
\begin{equation}  \label{Achange-aala}
\tilde{\lambda }= p\lambda , \ \ \ \tilde{A}=pP^{-1}AP, \ \ \ \tilde{v} = p^2 P^{-1} v - p\lambda P^{-1}\overline{a} + p P^{-1}A \overline{a}, 
\end{equation}
where $P = \mbox{diag}\{ p_2, \ldots , p_n\}$. In summary, we have

\begin{lemma}  \label{lemma5b}
Let ${\mathfrak g}$ be an almost abelian Lie algebra with a complex structure $J$. Given any two Hermitian metrics  $g$, $\tilde{g}$ on $({\mathfrak g},J)$,  there exist admissible frames $e$ for $g$ and $\tilde{e}$ for $\tilde{g}$ related by (\ref{echange-aala}), with the corresponding structure constants related by (\ref{Achange-aala}).  
\end{lemma}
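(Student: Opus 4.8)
The plan is to treat the statement as a bookkeeping consequence of the fact that the subspaces $\mathfrak{a}$ and $\mathfrak{a}_J=\mathfrak{a}\cap J\mathfrak{a}$ are intrinsic to $(\mathfrak{g},J)$ and do not depend on the choice of Hermitian metric. First I would fix one abelian ideal $\mathfrak{a}$ of codimension $1$ once and for all, together with the associated $(1,0)$-subspace $\mathfrak{a}_J^{1,0}\subset\mathfrak{g}^{1,0}$, which is a complex hyperplane. Then every admissible frame, no matter which metric it is adapted to, has its last $n-1$ vectors spanning this same fixed $\mathfrak{a}_J^{1,0}$ and its first vector a unit normal to it. This rigidity is exactly what forces the block-triangular shape of (\ref{echange-aala}): since both $e$ and $\tilde{e}$ lie in $\mathfrak{g}^{1,0}$ there can be no $\overline{e}_j$-terms, and since $\tilde{e}_i\in\mathfrak{a}_J^{1,0}$ for $i\ge 2$ while $e_1\notin\mathfrak{a}_J^{1,0}$ and $\mathfrak{g}^{1,0}=\mathbb{C}e_1\oplus\mathfrak{a}_J^{1,0}$, no $e_1$-component can appear in $\tilde{e}_i$.

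For the existence of the two frames I would start from any $g$-admissible frame $e$ and note that $g$ and $\tilde{g}$ both restrict to Hermitian inner products on the fixed space $\mathfrak{a}_J^{1,0}$. Applying the simultaneous diagonalization of a pair of Hermitian forms (with $g$ positive definite) through a $g$-unitary change of $\{e_2,\ldots,e_n\}$, I may assume $e_2,\ldots,e_n$ are $g$-orthonormal and $\tilde{g}$-orthogonal; this change keeps $e$ admissible for $g$, since $\mbox{span}_{\mathbb{R}}\{e_i+\overline{e}_i,\sqrt{-1}(e_i-\overline{e}_i)\}=\mathfrak{a}_J$ and $e_1$ are untouched. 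Normalizing gives $\tilde{e}_i=p_ie_i$ with $p_i>0$, and I would then take $\tilde{e}_1$ to be the $\tilde{g}$-unit normal of $\mathfrak{a}_J^{1,0}$ in $\mathfrak{g}^{1,0}$, write $\tilde{e}_1=pe_1+\sum_{k\ge 2}a_ke_k$ with $p\neq 0$ (as $\tilde{e}_1\notin\mathfrak{a}_J^{1,0}$), and rotate its phase to arrange $p>0$. The one genuine verification is that $\tilde{e}$ is admissible for $\tilde{g}$: it is $\tilde{g}$-unitary by construction and $\tilde{e}_2,\ldots,\tilde{e}_n$ span $\mathfrak{a}_J^{1,0}$, while the remaining condition $\sqrt{-1}(\tilde{e}_1-\overline{\tilde{e}}_1)\in\mathfrak{a}$ holds automatically, because the $e_1$-part lies in $\mathfrak{a}$ by admissibility of $e$ and each $e_k$-part lies in $\mathfrak{a}_J\subseteq\mathfrak{a}$.

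The structure-constant relation (\ref{Achange-aala}) is then a direct computation. Dualizing (\ref{echange-aala}) by inverting the block-triangular change-of-basis matrix yields $\tilde{\varphi}_1=\frac{1}{p}\varphi_1$ and $\tilde{\varphi}_i=\frac{1}{p_i}\varphi_i-\frac{a_i}{pp_i}\varphi_1$, equivalently $\varphi_1=p\tilde{\varphi}_1$ and $\varphi_i=a_i\tilde{\varphi}_1+p_i\tilde{\varphi}_i$. Since $p,p_i,a_i$ are constants, $d$ passes through them, so I would apply $d$ to the coframe relations, insert the structure equations of Lemma \ref{lemma4} for the $\varphi$-frame, re-express everything in the $\tilde{\varphi}$-frame, and match against the same structure equation written with tilded constants. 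The coefficient of $\tilde{\varphi}_1\wedge\overline{\tilde{\varphi}}_1$ in $d\tilde{\varphi}_1$ gives $\tilde{\lambda}=p\lambda$ at once; the coefficients of $(\tilde{\varphi}_1+\overline{\tilde{\varphi}}_1)\wedge\tilde{\varphi}_j$ in $d\tilde{\varphi}_i$ give $\tilde{A}=pP^{-1}AP$; and collecting the leftover $\tilde{\varphi}_1\wedge\overline{\tilde{\varphi}}_1$ terms in $d\tilde{\varphi}_i$ produces the three-term formula for $\tilde{v}$.

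I expect the only places needing care to be the automatic-admissibility check of $\tilde{e}$ (confirming that no hidden constraint on the $a_k$ arises) and the bookkeeping in the $\tilde{v}$ formula, where substituting $\varphi_j=a_j\tilde{\varphi}_1+p_j\tilde{\varphi}_j$ into the $(\varphi_1+\overline{\varphi}_1)\wedge\varphi_j$ terms generates cross contributions of type $\tilde{\varphi}_1\wedge\overline{\tilde{\varphi}}_1$ that must be combined with the term coming from $-\frac{a_i}{pp_i}d\varphi_1$. This last collection is the most error-prone step, though it remains entirely mechanical.
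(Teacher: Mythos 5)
Your proposal is correct and takes essentially the same route as the paper: the paper likewise starts from an admissible frame $e$ for $g$, performs a unitary change of $\{e_2,\ldots,e_n\}$ so that $\tilde{e}$ of the form (\ref{echange-aala}) is admissible for $\tilde{g}$ (rotating $\tilde{e}_1$ to get $p>0$), then dualizes the frame change and substitutes into the structure equations of Lemma \ref{lemma4} to read off (\ref{Achange-aala}). Your added verifications --- the simultaneous diagonalization of the two inner products on ${\mathfrak a}_J^{1,0}$, and the check that $\sqrt{-1}(\tilde{e}_1-\overline{\tilde{e}}_1)\in{\mathfrak a}$ becomes automatic once the phase of $\tilde{e}_1$ is rotated so that $p$ is positive real --- correctly fill in details that the paper's terse presentation leaves implicit.
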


Now we are ready to prove Proposition \ref{prop2}, which by the averaging trick of Fino-Grantcharov is equivalent to the following
\begin{proposition}  \label{prop4}
Let ${\mathfrak g}$ be a unimodular, almost abelian Lie algebra and $J$ a complex structure on ${\mathfrak g}$. If $({\mathfrak g},J)$ admits a Hermitian-symplectic metric, then it will admit a K\"ahler metric. 
\end{proposition}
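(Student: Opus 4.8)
The plan is to apply the Hermitian-symplectic characterization of Lemma \ref{lemma3} to the explicit almost abelian structure constants in (\ref{CandD-aala}), and to show that the existence of the required skew-symmetric matrix $S$ forces enough vanishing that a K\"ahler metric can be produced by a change of admissible frame via (\ref{Achange-aala}). First I would record the Chern torsion under an admissible frame: a direct computation from (\ref{torsion}) and (\ref{CandD-aala}) gives $T^j_{1k}=A_{kj}+\overline{A_{jk}}=(A+A^{\ast})_{kj}$ and $T^1_{1k}=v_k$ for $2\le j,k\le n$, while all remaining torsion components vanish. In particular $g$ is K\"ahler precisely when $v=0$ and $A+A^{\ast}=0$, matching Lemma \ref{lemma5}(ii), so the whole task is to kill $v$ and the Hermitian part of $A$ after a suitable deformation.

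The key structural observation is that every nonzero component of $D$ in (\ref{CandD-aala}) has its last lower index equal to $1$. Hence in the second identity of (\ref{eq:HS}) the left-hand side $\sum_r(S_{rk}\overline{D^i_{rj}}-S_{ri}\overline{D^k_{rj}})$ vanishes identically whenever $j\ge 2$, forcing $T^j_{ik}=0$ for all $i,k$ and all $j\ge 2$. Reading this off the torsion computation gives $(A+A^{\ast})_{kj}=0$, i.e. $A$ is skew-Hermitian. Since $\mathfrak g$ must be unimodular, Lemma \ref{lemma5}(i) yields $\lambda+\mathrm{tr}(A)+\overline{\mathrm{tr}(A)}=0$; and as $A+A^{\ast}=0$ makes $\mathrm{tr}(A)$ purely imaginary, this forces $\lambda=0$. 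Thus the only obstruction to the K\"ahler condition that survives is the vector $v$.

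I expect the main obstacle to be showing that $v\in\Ran(A)$. Writing $S'=(S_{ij})_{i,j\ge 2}$ and $s=(S_{i1})_{i\ge 2}$, the remaining equations of (\ref{eq:HS}) (the first identity together with the $j=1$ cases of the second) reduce, after using $\lambda=0$ and $A^{\ast}=-A$, to the symmetry condition that $A^{\ast}S'$ be symmetric, equivalently $AS'=S'\overline A$, and to the single vector relation $As=S'\overline v-\sqrt{-1}\,v$. The plan is to project this relation onto $K:=\ker A$ by the orthogonal projection $\Pi$; since $A$ is normal, $\Ran(A)=K^{\perp}$ and $\Pi A=0$, so the left-hand side drops out and one obtains $v''=-\sqrt{-1}\,\Pi S'\overline v$ with $v''=\Pi v$. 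The commutation relation $AS'=S'\overline A$, upon multiplying by $\Pi$ and by the projection $\overline\Pi$ onto $\ker\overline A=\overline K$, shows that $N:=\Pi S'\overline\Pi$ satisfies $\Pi S'=N=S'\overline\Pi$, which collapses the previous identity to the closed conjugate-linear equation $v''=-\sqrt{-1}\,N\overline{v''}$. Conjugating and substituting gives $v''=N\overline N\,v''$, and the skew-symmetry of $S'$ forces $\overline N=-N^{\ast}$, whence $v''=-NN^{\ast}v''$. Pairing with $v''$ then yields $\|v''\|^2=-\|N^{\ast}v''\|^2\le 0$, so $v''=0$ and $v\in\Ran(A)$. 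This positivity step is the crux of the argument.

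Finally, with $\lambda=0$, $A$ skew-Hermitian, and $v\in\Ran(A)$, I would solve $A\overline a=-v$ for $a$ and apply the change-of-frame formula (\ref{Achange-aala}) with $p=1$ and $P=I$: this produces a new admissible frame whose structure constants are $\tilde\lambda=0$, $\tilde A=A$, and $\tilde v=v+A\overline a=0$. By Lemma \ref{lemma5}(ii) the corresponding Hermitian metric $\tilde g$ on $(\mathfrak g,J)$ is K\"ahler, which proves Proposition \ref{prop4} and hence Proposition \ref{prop2}.
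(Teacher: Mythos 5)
Your proof is correct, and its scaffolding coincides with the paper's: Lemma \ref{lemma3} applied to the structure constants (\ref{CandD-aala}) yields exactly the system (\ref{eq:13})--(\ref{eq:15}) (your ``$A^{\ast}S'$ symmetric'' is (\ref{eq:13}), your structural observation that every nonzero $D$ has last lower index $1$ is a cleaner route to (\ref{eq:14}), and $\lambda=0$ follows from Lemma \ref{lemma5}(i) identically), and the endgame via (\ref{Achange-aala}) and Lemma \ref{lemma5}(ii) is the same. Where you genuinely diverge is the crux, showing $v\in \Ran(A)$. The paper conjugates (\ref{eq:a}), multiplies by $\sqrt{-1}S'$, and adds, obtaining $-\sqrt{-1}(I-S'\overline{S'})v=A(u+\sqrt{-1}S'\overline{u})$; it then inverts the positive operator $I-S'\overline{S'}=I+S'S'^{\ast}$, which commutes with $A$, to exhibit an explicit $a$ with $v=-A\overline{a}$. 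You instead project onto $\ker A$ (legitimate since $A$ is normal, so $\Ran A=(\ker A)^{\perp}$ and $\Pi A=A\Pi=0$), use the intertwining $AS'=S'\overline{A}$ to get $\Pi S'=N=S'\overline{\Pi}$ with $N=\Pi S'\overline{\Pi}$, reach the closed equation $v''=-\sqrt{-1}N\overline{v''}$, and kill $v''$ by positivity, $\|v''\|^2=-\|N^{\ast}v''\|^2\leq 0$; all steps check out, including $\overline{N}=-N^{\ast}$ from the skew-symmetry of $S'$. Both mechanisms rest on the same two inputs ($S'$ skew-symmetric and $S'\overline{A}=AS'$), but they buy different things: the paper's version is constructive, producing the frame-change vector $a$ by an explicit formula, whereas yours is an existence argument that avoids the commutation-with-inverse computation and is, notably, the basis-free form of the argument the paper itself deploys in the harder codimension-two case --- in the proof of Proposition \ref{prop3}, equation (\ref{eq:v}) is handled by diagonalizing the normal matrix $X$, splitting $S'$ into blocks, and killing the kernel component via $0=\sqrt{-1}|v_2|^2$, which is precisely your projection-and-pairing step written in coordinates. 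One available streamlining: $N$ is itself skew-symmetric (${}^t\!N=-N$, since ${}^t\Pi=\overline{\Pi}$), so pairing $v''=-\sqrt{-1}N\overline{v''}$ directly against $v''$ gives $\|v''\|^2=-\sqrt{-1}\,{}^t\overline{v''}N\overline{v''}=0$ in one line, with no need for the conjugate-and-substitute detour through $v''=N\overline{N}v''$.
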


\begin{proof}
Let $({\mathfrak g},J,g)$ be a unimodular, almost abelian Lie algebra with  Hermitian structure. Assume that $g$ is Hermitian-symplectic. Let $e$ be an admissible frame. Then by Lemma \ref{lemma3}, the structure constants $C$ and $D$ obey the two identities in Lemma \ref{lemma3} for some skew-symmetric $n\times n$ matrix $S$. Write
$$ S=\left[ \begin{array}{cc} 0 & -\,^t\!u \\ u & \, S' \end{array} \right] , $$
where $u\in {\mathbb C}^{n-1}$ is a column vector and $S'$ is a skew-symmetric $(n-1)\times (n-1)$ matrix. Plug the data (\ref{CandD-aala}) into the two identities of Lemma \ref{lemma3}, we obtain
\begin{eqnarray}
&& S'\overline{A}+A^{\ast}S' =0, \label{eq:13} \\
&& A + A^{\ast} = 0,   \label{eq:14} \\
&& S'\overline{v} + A^{\ast} u + \lambda u = \sqrt{-1} v. \label{eq:15}
\end{eqnarray}
Since ${\mathfrak g}$ is assumed to be unimodular, by part (i) of Lemma \ref{lemma5}  and (\ref{eq:14}) we know that $\lambda =0$. So equation (\ref{eq:15}) becomes
\begin{equation} \label{eq:a}
S'\overline{v}-\sqrt{-1}v = Au.
\end{equation}
Taking the conjugate, and then mutiplying by $\sqrt{-1}S'$ from the left, we get
$$ \sqrt{-1}S' \overline{S'} v - S' \overline{v} = \sqrt{-1}S'\overline{A} \overline{u} = \sqrt{-1} AS'\overline{u} . $$
Note that in the last equality we used (\ref{eq:13}) and (\ref{eq:14}). Add the above equality to (\ref{eq:a}), we get
 \begin{equation} \label{eq:b}
- \sqrt{-1} (I - S'\overline{S'})v = A( u + \sqrt{-1}S'\overline{u}) .
\end{equation}
From (\ref{eq:13}) and (\ref{eq:14}), we have $S'\overline{A}=AS'$. Taking conjugate, we get  $\overline{S'}A=\overline{A}\,\overline{S'}$. Hence
$$ S'\overline{S'}A = S' (\overline{A}\,\overline{S'}) = (S' \overline{A})\overline{S'} = AS' \overline{S'}. $$
So $I-S'\overline{S'}$ commutes with $A$. Since $S'$ is skew-symmetric, $I-S'\overline{S'}>0$. Thus by (\ref{eq:b}) we get 
$$ v=  (I-S'\overline{S'})^{-1}A(\sqrt{-1}u - S'\overline{u})= - A (I-S'\overline{S'})^{-1}(  S'\overline{u}- \sqrt{-1}u). $$
Write $\overline{a}=(I-S'\overline{S'})^{-1}(S'\overline{u} - \sqrt{-1}u )$, the above equation tells us that $v=-A\overline{a}$ belongs to the image space of $A$. Now let $\tilde{e}_1=e_1+\sum_{k=2}^n a_ke_k$ and  $\tilde{e}_i=e_i$ for $2\leq i\leq n$. By using $\tilde{e}$ to be the unitary frame, we get a new Hermitian metric $\tilde{g}$ on $({\mathfrak g},J)$, whose structure constants by Lemma \ref{lemma5b} are given by 
$$ \tilde{\lambda} =\lambda =0, \ \ \ \tilde{A}=A \ \, \mbox{is skew-Hermitian, \ and} \ \ \tilde{v} = v + A \overline{a} = 0. $$ 
By part (ii) of Lemma \ref{lemma5} we now know that the new Hermitian metric $\tilde{g}$ is K\"ahler. This completes the proof of Proposition \ref{prop4}.
\end{proof}

\vspace{0.3cm}

\section{Lie algebras with $J$-invariant abelian ideal of codimension $2$}

In this section, we will prove Proposition \ref{prop3}, which is the main technical result of this article. Let ${\mathfrak g}$ be a Lie algebra and ${\mathfrak a}\subseteq {\mathfrak g}$ be an abelian ideal of codimension $2$. Note that such ${\mathfrak g}$ will always be solvable of step at most $3$, but in general it will not be $2$-step solvable. We should mention that $2$-step solvable groups form an interesting and rich class of study. In the beautiful recent papers \cite{FSwann},\cite{FSwann2}, Freibert and Swann studied the Hermitian geometry of such Lie groups, especially on balanced and pluriclosed (namely, SKT) structures.

Let $(J,g)$ be a Hermitian structure on ${\mathfrak g}$. Then the codimension of ${\mathfrak a}_J:= {\mathfrak a}\cap J{\mathfrak a}$ is either $2$ or $4$, and it is $2$ if and only if $J{\mathfrak a}={\mathfrak a}$. As mentioned before, the Hermitian structures in the $J{\mathfrak a}\neq {\mathfrak a}$ case are  much more complicated from the algebraic point of view, and in this article we will restrict ourselves to the easier case. We will assume $J{\mathfrak a}={\mathfrak a}$  from now on. 

First, let us follow the notations of \cite{GuoZ} to select simple frames and write down the structure constants. A unitary basis $\{ e_1, \ldots , e_n\}$ of ${\mathfrak g}^{1,0}$ will be called an {\em admissible frame} for $({\mathfrak g},J,g)$ if 
$$ {\mathfrak a} = \mbox{span}_{\mathbb R} \{ e_i+\overline{e}_i, \, \sqrt{-1}(e_i-\overline{e}_i); \ 2\leq i\leq n\} . $$
Since ${\mathfrak a}$ is abelian and is an ideal, we have
\begin{equation*}
C^{\ast}_{ij}=D^j_{\ast i} =C^1_{\ast \ast} = D^{i}_{1\ast} =D^{\ast}_{1i} = 0, \ \ \ \ \ \forall \ 2\leq i,j\leq n,
\end{equation*}
and the only possibly non-zero components of $C$ and $D$ are
\begin{equation}  \label{CD-XYZ}
C^{j}_{1i}= X_{ij}, \ \ \ D^1_{11}=\lambda, \ \ \ D^j_{i1}=Y_{ij}, \ \ \ D^1_{ij}=Z_{ij}, \ \ \ D^1_{i1}=v_i, \ \ \ \ \ 2\leq i,j\leq n,
\end{equation}
where $\lambda \geq 0$, $v\in {\mathbb C}^{n-1}$ is a column vector and $X$, $Y$, $Z$ are $(n-1)\times (n-1)$ complex matrices. Here and from now on we have rotated the angle of $e_1$ to assume that $D^1_{11}\geq 0$.  Let $\varphi$ be the coframe dual to $e$. For convenience, let us write $\varphi'$ for the column vector $^t\!(\varphi_2, \ldots , \varphi_n)$, then the structure equation becomes 
\begin{equation}
\left\{ \begin{split} d\varphi_1 \, = \, -\lambda \varphi_1\overline{\varphi}_1 , \hspace{4.4cm} \\
d\varphi' \, = \, - \varphi_1\overline{\varphi}_1 \overline{v} -\varphi_1\,^t\!X \varphi' + \overline{\varphi}_1 \overline{Y} \varphi' - \varphi_1 \overline{Z} \,\overline{\varphi'}. \end{split} \right.  \label{structure}
\end{equation}
By the Bianchi identity (\ref{CC}), or equivalently, by $d^2\varphi =0$, we get a system of matrix equations
\begin{equation}  
\left\{ \begin{split} \lambda (X^{\ast}\!+Y)+ [X^{\ast} ,Y]  -  Z\overline{Z} \, = \, 0, \\
\lambda Z - ( Z \,^t\!X + Y Z )\, =\, 0. \hspace{1.2cm} \end{split} \right.   \label{XYZ}
\end{equation}
Also, by (\ref{torsion}), we know that the possibly non-zero components of the Chern torsion are:
\begin{equation} \label{torsion-XYZ}
T^1_{1i}=v_i, \ \ \ \ T^1_{ij}=Z_{ji}-Z_{ij}, \ \ \ \ T^j_{1i}=Y_{ij}-X_{ij}, \ \ \ \ \ 2\leq i,j\leq n.
\end{equation}
By part (i) and (iii) of \cite[Proposition 3]{GuoZ}, we have
\begin{lemma} [\cite{GuoZ}]  \label{lemma7}
Let $({\mathfrak g},J,g)$ be a Lie algebra with Hermitian structure and let ${\mathfrak a}\subseteq {\mathfrak g}$ be a $J$-invariant abelian ideal of codimension $2$. Then under any admissible frame,
\begin{enumerate}
\item ${\mathfrak g}$ is unimodular if and only if $\, \lambda - \mbox{tr}(X)+\mbox{tr}(Y)=0$,
\item $g$ is K\"ahler if and only if $v=0$, $\,^t\!Z=Z$, and $X=Y$. When ${\mathfrak g}$ is unimodular, $g$ is K\"ahler if and only if $\lambda =0$, $v=0$, $Z=0$, and $X=Y$ is normal. 
\end{enumerate}
\end{lemma}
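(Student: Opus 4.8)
The plan is to prove both parts by direct substitution of the structure constants (\ref{CD-XYZ}) into the relevant criteria, reserving the genuine algebra for the unimodular refinement of part (ii). For part (i), I would feed the data (\ref{CD-XYZ}) into the unimodularity criterion (\ref{unimo}), which demands $\sum_r \big( C^r_{ri} + D^r_{ri}\big) =0$ for every $i$. Since the only nonzero components of $C$ and $D$ carry the index $1$ in a lower slot, almost all of these sums collapse: using the antisymmetry $C^r_{r1}=-C^r_{1r}=-X_{rr}$ I get $\sum_r C^r_{r1}=-\mbox{tr}(X)$, while $\sum_r D^r_{r1}=D^1_{11}+\sum_{r\geq 2}Y_{rr}=\lambda+\mbox{tr}(Y)$; and for $i\geq 2$ both sums vanish identically because no surviving component has the required index pattern. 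Hence the only nontrivial equation is the one for $i=1$, namely $\lambda-\mbox{tr}(X)+\mbox{tr}(Y)=0$, which is the claim.

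For the first assertion of part (ii), I would use that a Hermitian metric is K\"ahler exactly when its Chern torsion vanishes, and then simply read the components off (\ref{torsion-XYZ}): the equalities $T^1_{1i}=v_i$, $T^1_{ij}=Z_{ji}-Z_{ij}$, and $T^j_{1i}=Y_{ij}-X_{ij}$ all vanish if and only if $v=0$, $\,^t\!Z=Z$, and $X=Y$. The unimodular refinement then starts from these three relations combined with part (i): since $X=Y$ forces $\mbox{tr}(X)=\mbox{tr}(Y)$, the unimodularity identity instantly yields $\lambda=0$. Substituting $\lambda=0$, $Y=X$, and $\,^t\!Z=Z$ into the Jacobi/Bianchi equations (\ref{XYZ}) reduces them to $[X^{\ast},X]=Z\overline{Z}$ and $XZ+Z\,^t\!X=0$.

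The main obstacle is to extract $Z=0$ (and hence the normality of $X$) from $[X^{\ast},X]=Z\overline{Z}$, and the key trick is to take traces. The left-hand side has zero trace for any matrix, whereas on the right, using $Z=\,^t\!Z$, one computes $\mbox{tr}(Z\overline{Z})=\sum_{i,j}Z_{ij}\overline{Z_{ji}}=\sum_{i,j}|Z_{ij}|^2$, the squared Frobenius norm of $Z$. Thus $\sum_{i,j}|Z_{ij}|^2=0$, which forces $Z=0$; feeding this back gives $[X^{\ast},X]=0$, i.e. $X=Y$ is normal, so all four stated conditions hold. The converse is immediate: $\lambda=0$, $v=0$, $Z=0$, $X=Y$ trivially satisfy the torsion-vanishing conditions of the first assertion (the normality of $X$ being more than is needed there), so the metric is K\"ahler. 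This completes the proposed argument.
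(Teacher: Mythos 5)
Your proposal is correct and takes essentially the same approach as the source: the paper itself offers no proof of this lemma but cites \cite[Proposition 3]{GuoZ}, whose argument is the same direct verification you give --- substituting the structure constants (\ref{CD-XYZ}) into the unimodularity criterion (\ref{unimo}), reading the K\"ahler condition off the torsion components (\ref{torsion-XYZ}), and in the unimodular case combining $\lambda=0$ with the reduced Bianchi identity $[X^{\ast},X]=Z\overline{Z}$ and taking traces (legitimate here since $Z={}^t\!Z$ makes $\mbox{tr}(Z\overline{Z})$ the squared Frobenius norm) to force $Z=0$ and the normality of $X=Y$. The same trace trick is in fact the engine of the paper's own proof of Proposition \ref{prop3}, at equation (\ref{eq:29}).
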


Analogous to the almost abelian case in the previous section, here we will need one more lemma before we can proceed with the proof of Proposition \ref{prop3}. Suppose $g$ and $\tilde{g}$ are both Hermitian metrics on $({\mathfrak g}, J)$. Let $e$ be a given admissible frame for $g$. Rotate $\{ e_2, \ldots , e_n\}$ by a unitary matrix if necessary, we may assume that $\tilde{e}$ given below is an admissible frame for $\tilde{g}$:
\begin{equation}  \label{echange}
\tilde{e}_1 =p_1 e_1 + \sum_{k=2}^n a_ke_k, \ \ \ \ \ \ \tilde{e}_i = p_ie_i ,\ \  (2\leq i\leq n).
\end{equation}
Here $p_1$, $p_i$ are positive constants, and $a_i$ are complex numbers. Scale $\tilde{g}$ by a positive constant if necessary, we may always assume that $p_1=1$, which will simplify our writing below. The dual coframes are related by
$$ \tilde{\varphi}_1=\varphi_1, \ \ \ \ \tilde{\varphi}_i=\frac{1}{p_i}(\varphi_i-a_i\varphi_1), \ \ (2\leq i\leq n). $$
From the structure equation (\ref{structure}), by a straight forward computation we get the following relations between the two sets of structure constants:
\begin{equation} \label{XYZchange}
\left\{  \begin{split} \tilde{\lambda}=\lambda, \ \  \tilde{X}=PXP^{-1}, \  \ \tilde{Y}=P^{-1}YP, \ \ \tilde{Z}=P^{-1}ZP, 
\\ \tilde{v} = v -\lambda \overline{a} + P^{-1}Y \overline{a} +  P^{-1} Z a, \hspace{3.2cm}
\end{split} \right. 
\end{equation}
where $P=\mbox{diag}\{ p_2, \ldots , p_n\}$. In summary, we have
\begin{lemma}   \label{lemma8}
Let ${\mathfrak g}$ be a Lie algebra with a left-invariant complex structure $J$, and ${\mathfrak a}\subseteq {\mathfrak g}$ be a $J$-invariant abelian ideal of codimension $2$. Suppose $g$ and $\tilde{g}$ are both metrics on ${\mathfrak g}$ compatible with $J$. Then there exist admissible frames $e$ for $g$ and  $\tilde{e}$ for $\tilde{g}$ so that they are related by (\ref{echange}). Assuming that $p_1=1$, which can always be achieved by scaling $\tilde{g}$, then the corresponding structure constants are related by formula (\ref{XYZchange}). 
\end{lemma}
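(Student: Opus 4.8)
The plan is to establish the lemma in two stages: first to produce admissible frames $e$ for $g$ and $\tilde e$ for $\tilde g$ related by the triangular form (\ref{echange}), a linear-algebra construction, and then to derive the transformation rule (\ref{XYZchange}) by substituting the induced coframe change into the structure equation (\ref{structure}) and comparing coefficients. The guiding observation for the first stage is that the complex subspace $\mathfrak{a}^{1,0}:=\{x-\sqrt{-1}Jx\mid x\in\mathfrak{a}\}$ is independent of the metric: because $\mathfrak{a}$ is $J$-invariant of real codimension $2$, $\mathfrak{a}^{1,0}$ is a fixed $(n-1)$-dimensional subspace of $\mathfrak{g}^{1,0}$, and a unitary basis is admissible for a compatible metric $h$ precisely when its last $n-1$ vectors span $\mathfrak{a}^{1,0}$ and its first vector is an $h$-unit vector $h$-orthogonal to $\mathfrak{a}^{1,0}$. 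Both $g$ and $\tilde g$ therefore restrict to positive-definite Hermitian inner products on the same space $\mathfrak{a}^{1,0}$.

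First I would simultaneously reduce these two inner products: letting $H$ be the $g$-self-adjoint positive operator on $\mathfrak{a}^{1,0}$ defined by $\tilde g(x,y)=g(Hx,y)$, a $g$-orthonormal eigenbasis $\{e_2,\dots,e_n\}$ of $H$ is automatically $\tilde g$-orthogonal, so $p_i:=\tilde g(e_i,e_i)^{-1/2}>0$ makes $\tilde e_i:=p_ie_i$ a $\tilde g$-unitary basis of $\mathfrak{a}^{1,0}$. This is the ``unitary rotation of $\{e_2,\dots,e_n\}$'' in the statement, and it forces the lower block of the frame change to be diagonal. To fix the normal directions, I would complete $\{e_2,\dots,e_n\}$ to an admissible frame for $g$, obtaining $e_1$, and then take the ($1$-dimensional) $\tilde g$-orthogonal complement of $\mathfrak{a}^{1,0}$ in $\mathfrak{g}^{1,0}$, choose a $\tilde g$-unit vector in it, and expand it as $\tilde e_1=p_1e_1+\sum_{k \ge 2}a_ke_k$. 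Its coefficient $p_1$ cannot vanish, for otherwise $\tilde e_1\in\mathfrak{a}^{1,0}$ would be $\tilde g$-orthogonal to itself; a phase rotation of $\tilde e_1$ arranges $p_1>0$, and rescaling $\tilde g\mapsto p_1^2\,\tilde g$ (again a metric compatible with $J$) normalizes $p_1=1$. This yields the frames related by (\ref{echange}).

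For the second stage, with $p_1=1$ the dual coframes obey $\tilde\varphi_1=\varphi_1$ and $\tilde\varphi_i=\tfrac1{p_i}(\varphi_i-a_i\varphi_1)$, equivalently $\varphi_i=p_i\tilde\varphi_i+a_i\tilde\varphi_1$. I would substitute these, together with $d\tilde\varphi_1=d\varphi_1$ and $d\tilde\varphi_i=\tfrac1{p_i}(d\varphi_i-a_id\varphi_1)$, into (\ref{structure}), re-expand in the $\tilde\varphi$-basis, and match against the structure equation for $\tilde g$. Conjugation by the positive diagonal matrix $P=\mathrm{diag}\{p_2,\dots,p_n\}$ gives $\tilde\lambda=\lambda$, $\tilde X=PXP^{-1}$, $\tilde Y=P^{-1}YP$, and $\tilde Z=P^{-1}ZP$ by inspection, while the shift vector $a$ contributes only to the $\tilde\varphi_1\wedge\overline{\tilde\varphi}_1$ terms and produces the affine correction for $\tilde v$ recorded in (\ref{XYZchange}).

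I expect the first stage to be the conceptual heart: taking each $\tilde e_i$ (for $i \ge 2$) to be a positive multiple of $e_i$, rather than an arbitrary element of $\mathfrak{a}^{1,0}$, is exactly what the simultaneous diagonalization buys, and it hinges on $\mathfrak{a}^{1,0}$ being a metric-independent subspace carrying both inner products. The second stage is routine, but the step demanding the most care is the formula for $\tilde v$: unlike the three matrix identities, which can be read off directly, it requires collecting several $a$-dependent contributions coming from the $Y$-, $Z$-, and $\lambda$-terms, keeping track of the sign from $\overline{\tilde\varphi}_1\wedge\tilde\varphi_1=-\tilde\varphi_1\wedge\overline{\tilde\varphi}_1$.
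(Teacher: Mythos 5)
Your proposal is correct and follows essentially the same route as the paper, which likewise obtains (\ref{echange}) by a unitary rotation of $\{e_2,\dots,e_n\}$ (your simultaneous-diagonalization argument fills in the detail the paper leaves implicit) together with a phase rotation and a scaling of $\tilde{g}$ to normalize $p_1=1$, and then derives (\ref{XYZchange}) by substituting the dual coframe relation into the structure equation (\ref{structure}). One caveat: carrying out your final substitution carefully gives $\tilde{v}=P^{-1}\left(v-\lambda\overline{a}+Y\overline{a}+Za\right)$, i.e.\ $P^{-1}$ multiplies all four terms (consistent with the almost abelian formula (\ref{Achange-aala})), so the printed (\ref{XYZchange}) contains a typo in its first two terms --- harmless both for your argument and for the proof of Proposition \ref{prop3}, where $P=I$.
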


With all these preparations at hand, now we are ready to prove Proposition \ref{prop3} stated in the introduction. 

\begin{proof} [{\bf Proof of Proposition \ref{prop3}:}]
Let ${\mathfrak g}$ be a unimodular Lie algebra with an abelian ideal ${\mathfrak a}$ of codimension $2$ and with a Hermitian structure $(J,g)$. We assume that $J{\mathfrak a}={\mathfrak a}$ and let $e$ be an admissible frame. Assume that the metric $g$ is Hermitian-symplectic. Then the two identities in Lemma \ref{lemma3} hold for some skew-symmetric matrix $S$. Again write
$$ S=\left[ \begin{array}{cc} 0 & -\,^t\!u \\ u & \, S' \end{array} \right] , $$
where $S'$ is a skew-symmetric $(n-1)\times (n-1)$ matrix. By plugging in the data (\ref{CD-XYZ}), we get the following
\begin{eqnarray}
&& XS'+S'\,^t\!X=0, \label{eq:24} \\
&& S' \overline{Z} = \sqrt{-1} (Y-X),  \label{eq:25} \\
&& S' \overline{Y} + Y^{\ast}S' = \sqrt{-1} (Z-\,^t\!Z), \label{eq:26} \\
&& S'\overline{v} +  Y^{\ast}u +\lambda u =  \sqrt{-1}v.  \label{eq:27}
\end{eqnarray}
The matrices $X$, $Y$, $Z$ also satisfy (\ref{XYZ}) and $\lambda = \mbox{tr}(X-Y)$ by part (i) of Lemma \ref{lemma7}. Starting with the equation (\ref{eq:26}), multiply by $\overline{Z}$ on the right, we have
\begin{eqnarray}
Z\overline{Z}-\,^t\!Z\overline{Z} & = & -\sqrt{-1} (S'\overline{Y}\overline{Z} + Y^{\ast}S'\overline{Z}) \nonumber \\
& = &  -\sqrt{-1}\{ S' \overline{(\lambda Z - Z\,^t\!X)} +  Y^{\ast}S'\overline{Z}\} \nonumber \\
& = & -\sqrt{-1} \{  \lambda  (S' \overline{Z})- ( S' \overline{Z})X^{\ast} + Y^{\ast} (S' \overline{Z})\} \nonumber \\
& = & \lambda (Y-X) - (Y-X)X^{\ast} + Y^{\ast} (Y-X) . \label{eq:ZZt}
\end{eqnarray}
Here at the second equality sign we used the second equation of (\ref{XYZ}) and at the fourth equality we used  (\ref{eq:25}). Compare (\ref{eq:ZZt}) with the first line of (\ref{XYZ}), we obtain the following
\begin{equation*} 
^t\!Z\overline{Z} = \lambda (X+X^{\ast}) - XX^{\ast} - Y^{\ast}Y + X^{\ast}Y  + Y^{\ast} X, 
\end{equation*}
or equivalently, 
\begin{equation} \label{eq:29}
^t\!Z\overline{Z}  +(X-Y)^{\ast} (X-Y) = \lambda (X+X^{\ast}) + [X^{\ast}, X]  .
\end{equation}
The next step is to show that $\lambda =0$. By (\ref{eq:26}), we know that $Z+\sqrt{-1}S'\overline{Y}$ is a symmetric matrix. Let us denote this matrix by $A$, so $Z = A - \sqrt{-1}S'\overline{Y}$. By (\ref{eq:25}), we have
$$ X - Y = \sqrt{-1}S'\overline{Z} =  \sqrt{-1}S'(\overline{A} + \sqrt{-1} \overline{S'} Y) = S' S'^{\ast} Y + \sqrt{-1}S'  \overline{A} $$
Since $S'$ is skew-symmetric and $A$ is symmetric, the trace of $S' \overline{A}$ is zero, so we get
\begin{equation} \label{eq:tr}
\lambda = \mbox{tr}(X-Y) = \mbox{tr}(S' S'^{\ast} Y ). 
\end{equation}
Write $H=I + S' S'^{\ast} > 0$. Plug $X=HY+ \sqrt{-1}S'  \overline{A}$ into (\ref{eq:24}), we get
$$ HYS' + S'\,^t\!Y\,^t\!H = 0. $$
That is, $HYS'=B$ is symmetric. Since $^t\!H=\overline{H} = I + S'^{\ast} S'$, we have $S'\,^t\!H=HS'$, hence $S'^{\ast}H^{-1}$ is skew-symmetric, thus by (\ref{eq:tr}) we get
$$ \lambda =  \mbox{tr}(S' S'^{\ast} Y ) = \mbox{tr}(S'^{\ast} Y S') = \mbox{tr}(S'^{\ast}H^{-1}B) =0,   $$
since $B$ is symmetric and $S'^{\ast}H^{-1}$ is skew-symmetric. Now by taking trace on both sides of (\ref{eq:29}), we get $Z=0$ and $X=Y$, and $[X^{\ast},X]=0$ as well. In summary, we have:

\vspace{0.1cm}

{\em For a unimodular Hermitian Lie algebra $({\mathfrak g}, J, g)$ which contains a $J$-invariant abelian ideal of codimension $2$, the metric $g$ will be Hermitian-symplectic if and only if $\lambda =0$, $Z=0$, $X=Y$ is normal, and $v$ satisfies
\begin{equation} \label{eq:v}
S'\overline{v}+X^{\ast} u = \sqrt{-1} v
\end{equation}
for some column vector $u$ and skew-symmetric matrix $S'$. }

\vspace{0.1cm}

We claim that condition (\ref{eq:v}) is equivalent to $v \in \mbox{Im}(X)$, namely, $v$ belongs to the image space of $X$. To see this, since $X$ is normal, we may rotate our unitary basis $\{ e_2, \ldots , e_n\}$ to assume that $X$ is diagonal. Permute these $e_i$ if needed, we may assume that 
$$ X =\left[ \begin{array}{cc} X_1 & 0 \\ 0 & 0 \end{array} \right] ,  $$
where $X_1$ is non-degenerate (and  diagonal). By (\ref{eq:24}), $S'$ must be in corresponding block-diagonal form
$$ S' =\left[ \begin{array}{cc} S_1 & 0 \\ 0 & S_2 \end{array} \right] ,  $$
with $S_1$, $S_2$ being skew-symmetric. Now equation (\ref{eq:v}) becomes
$$ \left[ \begin{array}{c} S_1\overline{v}_1  \\  S_2\overline{v}_2 \end{array} \right] + \left[ \begin{array}{c} X_1^{\ast}u_1  \\  0 \end{array} \right] 
= \sqrt{-1} \left[ \begin{array}{c} v_1  \\  v_2 \end{array} \right] 
.  $$
Multiplying $v_2^{\ast}$ from the left side on the lower block of the above line, we get $0=\sqrt{-1}|v_2|^2$ as $S_2$ is skew-symmetric. This means that $v_2=0$ so $v$ is in the image space of $X$, namely, there exists a column vector $a$ so that $v=-X\overline{a}$. Now for
$$ \tilde{e}_1 = e_1 + \sum_{k=2}^n a_k e_k, \ \ \ \tilde{e}_i =e_i,\ \ 2\leq i\leq n, $$
then by Lemma \ref{lemma8} we know that the metric $\tilde{g}$ on $({\mathfrak g},J)$ with $\tilde{e}$ being unitary would have structure constants
$$ \tilde{\lambda}=0, \ \ \ \tilde{Z}=0, \ \ \ \tilde{X}=\tilde{Y} \ \ \mbox{is normal, \ \ and} \ \tilde{v}=0. $$ 
Hence the new Hermitian metric $\tilde{g}$ is  K\"ahler by part (ii) of Lemma \ref{lemma7}, and we have completed the proof of Proposition \ref{prop3}.
\end{proof}

\vspace{0.3cm}

\vs

\noindent\textbf{Acknowledgments.} We would like to thank Hisashi Kasuya for bringing the references \cite{FK} and \cite{FKV} to our attention, which covered a number of interesting types of solvmanifolds. The second named author would like to thank Bo Yang and Quanting Zhao for their interest and helpful discussions. We would also like to thank the referees are a number of valuable suggestions and corrections which enhanced the readability of the manuscript. 

\vs

\end{document}